\theoremstyle{plain}
\newtheorem{thm}{Theorem}[section]
\newtheorem{defn}[thm]{Definition}
\newtheorem{cor}[thm]{Corollary}
\newtheorem{lem}[thm]{Lemma}
\newtheorem{them}[thm]{Theorem}
\theoremstyle{remark}
\newtheorem{ex}[thm]{Example}
\newtheorem{rmk}[thm]{Remark}
\newcommand{\R}{\mathbb{R}}
\newcommand{\C}{\mathbb{C}}
\newcommand{\CF}{\mathrm{CF}}
\newcommand{\Crit}{\mathrm{Crit}}
\newcommand{\D}{\mathbb{D}}
\title{Equivalence of the pearly tree immersed Lagrangian Floer theory and the Hamiltonian immersed Lagrangian Floer theory}
\author{Zuyi Zhang }
\date{ }
\begin{document}

\maketitle

\begin{abstract}
    The goal of this paper is to prove an equivalence relation between the immersed Lagrangian Floer theory, defined using pearly tree discs, and local Hamiltonian flows, i.e., Hamiltonian flows performed in the Weinstein tubular neighborhood. This is a generalization of Alston-Bao's work.
\end{abstract}

\section{Introduction}
The goal of this paper is to prove an equivalence relation between the immersed Lagrangian Floer theory, defined using pearly tree discs, and local Hamiltonian flows, i.e., Hamiltonian flows performed in the Weinstein tubular neighborhood.\\

The Lagrangian Floer theory was first introduced by Floer to solve Arnold's conjecture in \cite{floer1988morse}. Given an embedded Lagrangian submanifold $L$ in a symplectic manifold $M$, Floer constructed the Lagrangian Floer homology whose generators are the intersections $L\cap\phi(L)$, and the boundary map is definde by counting the numbers of holomorphic discs connecting the points in $L\cap\phi(L)$. Here, $\phi$ is a Hamiltonian diffeomorphism on $M$ such that $L$ and $\phi(L)$ intersects transversely. The Hamiltonian immersed Lagrangian Floer theory generalizes Floer's work to immersed Lagrangian submanifold. elated works include those by Akaho-Joyce \cite{akaho2010immersed}, Fukaya-Oh-Ohta-Ono \cite{fukaya2009lagrangian}, and Abouzaid \cite{abouzaid2008fukaya}, among others.\\

The pearly tree version of Lagrangian Floer theory was first introduced by Oh \cite{oh1994relative} and later studied extensively by Biran-Cornea \cite{biran2007quantum}, \cite{biran2009lagrangian}. Other related works include those by Woodward-Palmer \cite{palmer2021invariance}. Given a morse function $f$ on a Lagrangian submanifold $L$ in a symplectic manfold $M$, the generators of Lagrangian Floer theory in this version are the critical points of $f$. The boundary map is defined by counting the number of pearly trajectories whose moduli space has dimension 0. The pearly trajectories are by definition to be the holomorphic discs connected by the gradient flows of $f$ (Figure \ref{fig:Pearlytraj}). In \cite{alston2021immersed}, when the Lagrangian submanifold is an immersion and the Lagrangian Floer chain group is a chain complex, Alston and Bao proved that when the Lagrangian submanifold is an immersion and the Lagrangian Floer chain group forms a chain complex, the Lagrangian Floer homology in this version is isomorphic to the Lagrangian Floer homology using Hamiltonian flows, assuming additional conditions on the Maslov index.\\

In this paper, we generalize the work of Alston-Bao \cite{alston2021immersed} to the case where the Lagrangian Floer chain group is not necessarily a chain complex (commonly referred to as obstructed). Instead of proving a quasi-isomorphism as in Alston-Bao, we establish a canonical identification between the generators and boundary maps of these two chain groups. The main theorem of this paper is as follows:

\begin{them}
    Let $(M^n,\omega)$ be a closed symplectic manifold. Suppose $g:L\looparrowright M$ is a Lagrangian immersion. Assume $f$ is a morse function defined on $L$ whose critical points are distinct from the self-intersections of $L$. Given a regular almost complex structure on $M$,  there is a local Hamiltonian flow $\phi_t$ such that the pearly tree immersed Lagrangian Floer chain group $(\CF^P(L),\partial^P)$, defined by $f$, is identified with the Hamiltonain immersed Lagrangian Floer chain group $(\CF^H(L,L_{\phi_\epsilon}),\partial^H)$, where $L_{\phi_\epsilon}$ denotes the Lagrangian immersion $\phi_\epsilon\circ g$ for some small $\epsilon>0$.
    
    Conversely, equipped with a regular almost complex structure on $M$, given a local Hamiltonian flow $\phi_t$ such that the corresponding Hamiltonian function restricted to $L$ is a morse function $f$, there is a canonical identification between $(\CF^H(L,L_{\phi_\epsilon}),\partial^H)$ and the pearly tree immersed Lagrangian Floer chain group $(\CF^P(L),\partial^P)$, defined by $f$, for sufficiently small $\epsilon>0$.
\end{them}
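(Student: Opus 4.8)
The plan is to construct an explicit bijection on generators and then match the two boundary operators through an adiabatic-limit analysis, running the construction in both directions from a single geometric correspondence. First I would fix a Weinstein tubular neighborhood adapted to the immersion: away from the self-intersection locus the immersion $g$ identifies a neighborhood of $g(L)$ with a neighborhood of the zero section in $T^*L$, while near each self-intersection point I work branch by branch, modeling each of the two local sheets on a cotangent-fiber neighborhood. I would then choose the local Hamiltonian $H$ supported in this neighborhood with $H|_L = f$, arranging (as in Alston--Bao) that in the cotangent model the time-$\epsilon$ image $L_{\phi_\epsilon}$ is $C^1$-close to the graph of $\epsilon\,df$.

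The first substantive step is the generator correspondence. The intersections $L\cap L_{\phi_\epsilon}$ split into two families: those where the graph of $\epsilon\,df$ meets the zero section, which are exactly $\Crit(f)$, and those arising from the self-intersections of $g$, which persist under the small perturbation. Because the hypothesis forces $\Crit(f)$ to be disjoint from the self-intersection locus, these families do not collide, so for $\epsilon$ small they are in canonical bijection with the pearly generators---the critical points together with the ordered self-intersection points. I would then verify that the gradings match by comparing the Morse index of $p\in\Crit(f)$ with the Maslov/Conley--Zehnder index of the corresponding transverse intersection, and that the self-intersection generators carry the same degree shift on both sides.

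The heart of the argument is matching $\partial^P$ with $\partial^H$ by studying the degeneration of rigid Floer strips as $\epsilon\to 0$. I expect to show, via an energy estimate together with a compactness and removal-of-singularities argument, that a rigid strip $u\colon\R\times[0,1]\to M$ with boundary on $L$ and $L_{\phi_\epsilon}$ converges to a pearly trajectory: a finite chain of holomorphic discs with boundary on $L$, joined by negative gradient flow segments of $f$, with the boundary switching sheets at self-intersection generators. Conversely, a gluing theorem should produce, for each rigid pearly trajectory and each sufficiently small $\epsilon$, a unique Floer strip. Running these two constructions against each other gives a bijection between the zero-dimensional moduli spaces, and I would check that it preserves both the sign---by comparing coherent orientations---and the Novikov weight---the symplectic area being continuous in the limit---so that the structure coefficients of $\partial^P$ and $\partial^H$ agree term by term. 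This yields the claimed chain-level identification without ever invoking $\partial^2=0$, which is exactly what the obstructed setting demands.

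The main obstacle is the compactness-and-gluing analysis near the self-intersection locus, where the strip's boundary must switch sheets and the cotangent model degenerates. Here the adiabatic limit is not a clean fibration over a Morse trajectory but involves the figure-eight/switching behavior of the Lagrangian boundary conditions; controlling the energy concentration there and establishing surjectivity of the gluing map is where the real work lies, and is precisely the point at which the immersed setting goes beyond the embedded adiabatic-limit arguments. The converse direction of the theorem then follows from the same correspondence, since the hypothesis that $H|_L=f$ is Morse lets me reverse the roles of the two descriptions verbatim, so that the identification obtained in the first direction serves, read backwards, as the canonical identification asserted in the second.
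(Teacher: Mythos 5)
Your strategy is genuinely different from the paper's, and it also leaves a real gap in the converse direction. The paper never performs an adiabatic-limit/degeneration-and-gluing analysis as $\epsilon\to 0$. Instead it first observes (Corollary \ref{cor:nomorsetra}) that the rigid pearly configurations of Types 2--4 have constant Morse-trajectory parts, so the only objects to be matched are either pure Morse trajectories (Type 1) or pure holomorphic discs with a switching boundary. For Type 1 it uses Floer's explicit correspondence $\widetilde{u}(s,t)=\phi^1_t(u(s))$ between gradient lines of $f$ and thin holomorphic strips, valid under a $C^2$-smallness assumption on $H_1$ that is then removed by rescaling $H_1\mapsto aH_1$ and invoking a continuation argument. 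For Types 2--4 it moves one boundary component by the Hamiltonian isotopy and identifies the rigid moduli spaces via Lemma \ref{lem:famlag}, a parametrized-transversality/continuation lemma for a family of Lagrangian boundary conditions with a regular loop of almost complex structures. Your route---energy estimates, Gromov compactness, removal of singularities, and a gluing theorem near the sheet-switching locus---is the standard PSS/Fukaya--Oh style argument and would in principle also work, but you correctly identify that the compactness-and-gluing analysis at the self-intersections is the hard part, and you do not carry it out; as written the heart of your argument is a statement of what you ``expect to show.'' Note also that your limit objects (chains of several discs joined by gradient segments) are more general than the single-disc pearly configurations the paper's $\partial^P$ actually counts, so you would additionally need to rule out multi-disc breaking in the rigid case.

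The concrete gap is in the converse direction. The theorem asserts the identification for a \emph{given} local Hamiltonian flow $\phi_t$ with $H|_L=f$ Morse; such an $H$ need not be constant in the fiber directions, so $L_{\phi_\epsilon}$ is not the graph of $\epsilon\,df$ in the cotangent model, and the forward correspondence cannot simply be ``read backwards.'' The paper handles this with a separate argument (Theorem \ref{thm:5.2}): it interpolates between $\phi_t$ and the flow $\phi^1_t$ of the fiber-constant extension $H_1$ of $H|_L$ via $\mathcal{P}_{s,t}=\phi_{(1-s)t}\circ\phi^1_{st}$, proves transversality of $\mathcal{P}_{s,t}(L)$ with $L$ for small $t$ using the nondegeneracy of $\partial^2 H/\partial x^2$, and then applies Lemma \ref{lem:famlag} again to match the boundary operators. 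You would need an analogous normalization step to complete your converse direction.
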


\begin{figure}[H]
\centering 
\includegraphics[width=0.2\textwidth]{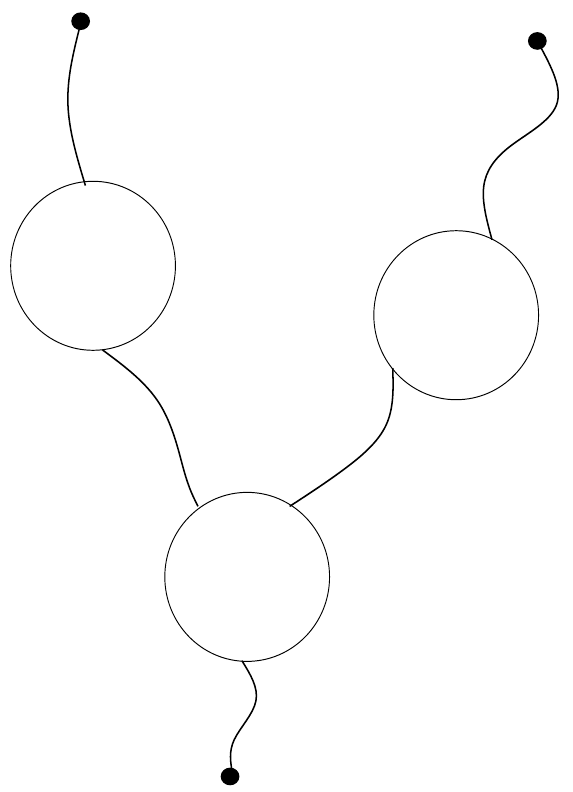}
\caption{The curves are gradient flows and the discs are holomorphic discs. The top dots are inputs and the bottom dot is the output.}
\label{fig:Pearlytraj}
\end{figure}

\section{Preliminary}
The aim of this section is to provide the necessary background on the moduli space of $J$-holomorphic discs with Lagrangian boundary conditions. The second part of this section presents some transversality results for the moduli space.

\begin{defn}
    Let $(M,\omega)$ be a symplectic manifold. \textbf{An almost complex structure} $J$ on $M$ is a bundle map such that
    \begin{itemize}
        \item $J:TM\rightarrow TM$,
        \item $J^2=-id$.
    \end{itemize}
    An almost complex complex structure $J$ is said to be \textbf{compatible} with $\omega$ if 
    \begin{itemize}
        \item $\omega(\cdot,J\cdot)>0$,
        \item $\omega(J\cdot,J\cdot)=\omega(\cdot,\cdot)$.
    \end{itemize}
\end{defn}

\begin{defn}
    Let $(M,\omega)$ be a symplectic manifold with a compatible almost complex structure $J$. A $J$-holomorphic disc is a smooth map $u(s,t):\mathbb{D}\rightarrow M$ such that
    \[
    \frac{du}{ds}+J(u)\frac{du}{dt}=0,
    \]
    where $\mathbb{D}$ is the unit disc in $\C$.
\end{defn}

\begin{defn}
    Let $(M,\omega)$ be a symplectic manifold equipped with a compatible almost complex structure $J$, and let $g_i:L_i\looparrowright M$, $i=1,2$, be two Lagrangian immersions. A $J$-holomorphic disc $u(s,t):\D\rightarrow M$ is said to \textbf{have boundary in $L_1$ and $L_2$}, if $u|_{\partial\D-(\pm1,0)}$ lifts to a smooth map
    \[
    \Tilde{u}:\partial\D-(\pm1,0)\rightarrow L_1\cup L_2
    \]
    such that
    \begin{itemize}
        \item $\Tilde{u}(e^{i\theta})\in L_1$, $\theta\in(0,\pi)$,
        \item $\Tilde{u}(e^{i\theta})\in L_2$, $\theta\in (\pi,2\pi)$,
        \item $\lim_{\theta\rightarrow\pi^-}g_1(\Tilde{u}(e^{i\theta}))=\lim_{\theta\rightarrow\pi^+}g_2(\Tilde{u}(e^{i\theta}))=u(-1,0)$,
        \item $\lim_{\theta\rightarrow0^+}g_1(\Tilde{u}(e^{i\theta}))=\lim_{\theta\rightarrow0^-}g_2(\Tilde{u}(e^{i\theta}))=u(1,0)$.
    \end{itemize}
\end{defn}

\begin{defn}
    Let $g_1:L_1\rightarrow M$ and $g_2:L_2\rightarrow M$ be two Lagrangian immersions into a symplectic manifold $(M,\omega)$. The {\bf generalized intersections of $L_1$ and $L_2$} are points in the fiber product
\[
L_1\times_M L_2=\{(x_1,x_2)\in L_1\times L_2|\ l_1(x_1)=l_2(x_2)\}.
\]
\end{defn}

Let $(M,\omega)$ be a symplectic manifold with a compatible almost complex structure $J$. Suppose that $L_1$ and $L_2$ are two Lagrangian immersions in $M$ that intersect transversely. Let $u:\D\rightarrow M$ be an immersed holomorphic disc with its boundary in $L_1$ and $L_2$ connecting $x_\pm\in L_1\times_M L_2$. Denote by $W^{1,p}_{x_\pm}(\D,M)$ the Sobolev $W^{1,p}$ space of all such immersed discs connecting $x_\pm$, with $p\in(2,\infty)$. For $u\in W^{1,p}_{x_\pm}(\D,M)$, let $L^p(\D,\Omega^{0,1}\otimes u^*TM)$ denote the Sobolev $L^p$ completion of the sections of the bundle $\Omega^{0,1}\otimes u^*TM\rightarrow \D$, where $\Omega^{0,1}$ is the bundle of $(0,1)$-forms over $\D$. Therefore, the Cauchy-Riemann operator induces the map
\begin{equation}\label{equ:3.1}
    \begin{aligned}
    \mathcal{F}: W^{1,p}_{x_\pm}(\D,M)\times \mathcal{J}&\rightarrow L^p(\D,\Omega^{0,1}\otimes u^*TM)\\
    (u,J)&\mapsto \bar\partial_Ju,
\end{aligned}
\end{equation}
where $\mathcal{J}$ is the space of all $J$-holomorphic structures that are compatible with $\omega$. \\

The following theorem is proved with $\D$ replaced by $\R\times[0,1]$ in \cite{zhang2024construction} (Section 3). Since $\D$ and $\R\times[0,1]$ can be identified using Riemann mapping theorem, the theorem below holds.

\begin{them}
    Suppose $(M,\omega)$ is a closed symplectic manifold. Let $L_1$ and $L_2$ be two Lagrangian immersions in $M$. Denote $\mathcal{J}$ as the set of compatible almost complex structures on $M$. Then the differential $D\bar\partial_J$ of the Cauchy-Riemann operator 
    \begin{equation*}
        \bar\partial_J:W^{1,p}_{x_\pm}(\D,M)\rightarrow L^p(\D,\Omega^{0,1}\otimes u^*TM)
    \end{equation*}
    is surjective for an open and dense subset of $\mathcal{J}$, where $x_\pm\in L_1\times_M L_2$.
\end{them}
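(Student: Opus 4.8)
\section*{Proof proposal}

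The plan is to run the standard universal-moduli-space transversality argument of Floer--Hofer--Salamon and McDuff--Salamon, adapted to the immersed Lagrangian boundary setting, and then to invoke the Sard--Smale theorem. First I would enlarge the problem to the \emph{universal} setting by regarding the map $\mathcal{F}$ of \eqref{equ:3.1} on the total space $W^{1,p}_{x_\pm}(\D,M)\times\mathcal{J}$ and studying its zero set $\mathcal{M}=\mathcal{F}^{-1}(0)$, the universal moduli space of pairs $(u,J)$ with $\bar\partial_J u=0$. To make $\mathcal{M}$ a Banach manifold I would replace $\mathcal{J}$ by a Banach completion (either the $C^\ell$ structures or Floer's $C^\infty_\varepsilon$ space), so that $\mathcal{J}$ is a Banach manifold whose tangent space at $J$ consists of endomorphisms $Y$ of $TM$ with $YJ+JY=0$ together with the symmetry constraint inherited from $\omega$-compatibility. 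The crucial claim is that the full linearization $D\mathcal{F}_{(u,J)}(\xi,Y)=D\bar\partial_J(u)\xi+\tfrac12\,Y(u)\,du\circ j$ is surjective at every $(u,J)\in\mathcal{M}$; granting this, $\mathcal{M}$ is a Banach manifold and the projection $\pi:\mathcal{M}\to\mathcal{J}$ is Fredholm of index equal to $\operatorname{ind}D\bar\partial_J$.

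To prove surjectivity of $D\mathcal{F}$ I would argue by duality. Since $D\bar\partial_J(u)$ is Fredholm its image is closed, so it suffices to show the image of $D\mathcal{F}$ is dense; if not, by Hahn--Banach there is a nonzero $\eta\in L^q(\D,\Omega^{0,1}\otimes u^*TM)$, with $1/p+1/q=1$, annihilating it. Pairing $\eta$ against the $\xi$-directions shows $\eta$ lies in the kernel of the formal adjoint $D\bar\partial_J^*$; elliptic regularity then makes $\eta$ smooth, and Aronszajn's unique continuation theorem shows $\eta$ cannot vanish on any open set unless $\eta\equiv0$. Pairing $\eta$ against the $Y$-directions gives $\int_\D\langle\eta,\,Y(u)\,du\circ j\rangle=0$ for every admissible $Y$. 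The heart of the argument is to choose, at an interior point $z_0$ where $u$ is an immersion, injective among its preimages, and whose image avoids $L_1\cup L_2$, an infinitesimal variation $Y$ supported in a small ball around $u(z_0)$ for which the pointwise pairing $\langle\eta(z_0),\,Y(u(z_0))\,du(z_0)\circ j\rangle$ is nonzero, forcing $\eta(z_0)=0$ and hence $\eta\equiv0$, a contradiction.

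I expect the construction of this localized, admissible $Y$ to be the main obstacle: unlike the case of a single embedded Lagrangian, the perturbation here must respect $\omega$-compatibility (so $Y$ is constrained to a proper subspace of $\mathrm{End}(TM)$) and must be localized away from \emph{both} immersed Lagrangians and away from the other branches of $u$ passing through $u(z_0)$. This is precisely why the existence of a somewhere-injective interior point with image off $L_1\cup L_2$ is essential; such points are dense for a nonconstant (here immersed) $J$-holomorphic disc, and the constraint subspace of $\mathrm{End}(TM)$ is still large enough to realize any prescribed value of the pairing, which makes the localization possible. One should also check that the boundary punctures at $\pm1$ mapping to $x_\pm$ cause no difficulty, since the entire annihilator argument is carried out at an interior point and in the interior adjoint equation.

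Finally, with $D\mathcal{F}$ surjective, $\mathcal{M}$ is a Banach manifold and Sard--Smale applied to $\pi$ yields a residual, hence dense, set of regular values $J$, at each of which $D\bar\partial_J(u)$ is surjective for every $u\in\pi^{-1}(J)$. To upgrade density to the asserted \emph{open and dense} subset I would restrict to discs of bounded energy connecting $x_\pm$ and use Gromov compactness: surjectivity is stable under small perturbations of $J$ over such a compact family, so the regular set is open, and the passage from the $C^\ell$ completions back to the smooth category is handled by the standard Taubes intersection argument. Since the analogous statement with $\D$ replaced by $\R\times[0,1]$ is established in \cite{zhang2024construction}, the Riemann mapping identification transports the whole scheme to the disc.
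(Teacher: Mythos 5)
Your proposal is essentially the intended argument: the paper itself gives no proof of this theorem, deferring entirely to \cite{zhang2024construction} for the strip $\R\times[0,1]$ version and then invoking the Riemann mapping identification, and your universal-moduli-space/Sard--Smale outline (with the duality, unique-continuation, and localized-perturbation steps, plus the Taubes and Gromov-compactness upgrades) is the standard proof that reference carries out, ending with the same Riemann-mapping reduction. The one point you rightly flag as the main obstacle --- producing a somewhere-injective interior point off $L_1\cup L_2$ for a disc, which is more delicate than in the closed-curve case --- is also not addressed in this paper, which sidesteps it by restricting the function space to immersed discs from the outset.
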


\begin{defn}
    Denote $\mathcal{J}_{reg}\subset\mathcal{J}$ as the subset where the differential of the Cauchy-Riemann operator $D\bar\partial_J$ is surjective. An element $J\in \mathcal{J}_{reg}$ is called a \textbf{regular almost complex structure}.
\end{defn}

Since $D\bar\partial_J$ is Fredholm with trivial cokernel for $J\subset\mathcal{J}_{reg}$, if moreover, the index of $D\bar\partial_J$ is 0, then space of the solutions of the Cauchy-Riemann operator is a smooth manifold of dimension 0. According to Gromov compactness, this space consists of finitely many points. Thus, we have the following corollary.

\begin{cor}
    Suppose $(M,\omega)$ is a closed symplectic manifold. Let $L_1$ and $L_2$ be two Lagrangian immersions in $M$. If $J\in\mathcal{J}_{reg}$ and the index of the differential of the Cauchy-Riemann operator is 0, then the solution space of the Cauchy-Riemann operator consists of finitely many points.
\end{cor}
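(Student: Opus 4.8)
The plan is to combine the infinite-dimensional regular-value theorem with Gromov compactness, following the sketch given just before the statement. First I would describe the solution space as the zero locus $\mathcal{F}_J^{-1}(0)=\{u\in W^{1,p}_{x_\pm}(\D,M):\bar\partial_J u=0\}$ for the fixed regular structure $J$. Because $J\in\mathcal{J}_{reg}$, the linearization $D\bar\partial_J$ is surjective at every solution, and since it is Fredholm it has trivial cokernel there. The implicit function theorem in Banach spaces (the regular-value theorem for Fredholm sections) then identifies the zero locus near each solution with the kernel of $D\bar\partial_J$, so it is a smooth manifold of dimension equal to the index of $D\bar\partial_J$. With the index equal to $0$, this manifold is $0$-dimensional, i.e.\ a set of isolated points.

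The remaining task is to upgrade ``isolated'' to ``finitely many.'' The key input is a uniform energy bound: since all the discs in question connect the fixed generalized intersection points $x_\pm\in L_1\times_M L_2$ and lie in a fixed topological sector, the symplectic area is determined by the boundary data, so the energy is bounded. Gromov compactness then asserts that any sequence of such $J$-holomorphic discs has a subsequence converging, modulo reparametrization, to a stable broken configuration. I would argue that in the present $0$-dimensional situation no genuine strip-breaking, disc bubbling, or sphere bubbling can occur: any nonconstant bubble or broken component would be a nonconstant $J$-holomorphic piece which, for $J\in\mathcal{J}_{reg}$, carries positive index, and additivity of the index under the degeneration would force the total index to exceed $0$, contradicting the hypothesis. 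Hence every Gromov limit is again an honest disc in the same solution space.

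It follows that the solution space is sequentially compact, and a sequentially compact $0$-dimensional manifold is finite, which is the assertion. The step I expect to be the main obstacle is the compactness argument rather than the transversality argument: in the immersed setting one must rule out degenerations in which a boundary point of the disc runs into a self-intersection of $L_1$ or $L_2$, since such configurations are not excluded by the bare energy bound. Controlling these requires the standard removal-of-singularity and gluing analysis to identify the Gromov limit with a nodal curve of the expected combinatorial type, after which the index bookkeeping above eliminates every stratum except the top one. Once this degeneration analysis is in place, the finiteness is immediate.
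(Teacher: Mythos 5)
Your argument follows the same route as the paper, whose entire justification is the short paragraph preceding the corollary: for $J\in\mathcal{J}_{reg}$ the operator $D\bar\partial_J$ is Fredholm with trivial cokernel, so index $0$ makes the solution space a smooth $0$-dimensional manifold, and Gromov compactness then gives finiteness. Your extra discussion of energy bounds, the exclusion of bubbling and breaking by index additivity, and the care needed at self-intersections in the immersed setting fills in details the paper leaves implicit, but it is the same approach.
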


Given $g:L\looparrowright M$ as a Lagrangian immersion and $g_1^t:L_t\looparrowright M$ as a smooth family of Lagrangian immersions for $t\in[0,1]$ in a closed symplectic manifold $(M.\omega)$, assume $L_t$ intersects $L$ transversely for all $t$. Let $x_\pm^t$ represent two paths of intersections between $L_t$ and $L$ parametrized by $t$. Define
\[
W^{1,p}_{x_\pm^{[0,1]}}(\D,M):=\bigcup_t W^{1,p}_{x_\pm^t}(\D,M),
\]
and
\[
L^p_{[0,1]}:= \bigcup_tL^p(\D,\Omega^{0,1}\otimes (u^t)^*TM),\ \ \{u^t\}_t\in W^{1,p}_{x_\pm^{[0,1]}}(\D,M).
\]
With these notations, the proof of the theorem below follows the argument from McDuff-Salamon \cite{mcduff2012j} (Theorem 3.1.8) or Floer \cite{floer1988morse} (Theorem 5a).

\begin{them}\label{thm:regfam}
    Let $(M,\omega)$ be a closed symplectic manifold. Suppose that $g:L\looparrowright M$ is a Lagrangian immersion and $g_1^t:L_t\looparrowright M$ is a smooth family of Lagrangian immersions for $t\in[0,1]$. Assume $L_t$ intersects $L$ transversely for all $t$. Given $J_0\in\mathcal{J}_{reg}$, then for generic based loops $\{J_t\}_{t\in[0,1]}\in\mathcal{J}$ starting at $J_0$ in $\mathcal{J}$, we have $\{J_t\}_{t\in[0,1]}\subset\mathcal{J}_{reg}$.
\end{them}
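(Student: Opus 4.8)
The plan is to run the universal moduli space construction together with the Sard--Smale theorem, in the spirit of McDuff--Salamon and Floer, but over the \emph{based} path space. First I would equip the space
$$\mathcal{P}=\{\,\{J_s\}_{s\in[0,1]}\subset\mathcal{J}\ :\ J_0\text{ fixed}\,\}$$
of smooth paths of compatible almost complex structures with the structure of a separable Banach manifold (using, say, Floer's $C^\varepsilon$ norm so that the ambient space of perturbations is a genuine Banach space). Over $\mathcal{P}$ I would form the universal moduli space
$$\mathcal{M}^{univ}=\Big\{\,(t,u,\{J_s\})\ :\ t\in[0,1],\ \{J_s\}\in\mathcal{P},\ u\in W^{1,p}_{x_\pm^t}(\D,M),\ \bar\partial_{J_t}u=0\,\Big\},$$
viewed as the zero set of the smooth section $\mathcal{F}(t,u,\{J_s\})=\bar\partial_{J_t}u$ of the Banach bundle with fiber $L^p(\D,\Omega^{0,1}\otimes u^*TM)$ over $[0,1]\times W^{1,p}_{x_\pm^{[0,1]}}(\D,M)\times\mathcal{P}$.

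The crux is to show that $\mathcal{F}$ is transverse to the zero section, so that $\mathcal{M}^{univ}$ is a smooth Banach manifold. At a zero, the vertical differential is
$$D\mathcal{F}(\tau,\xi,\{Y_s\})=D\bar\partial_{J_t}u\cdot\xi+\tau\,\partial_t\big(\bar\partial_{J_t}u\big)+\tfrac12\,Y_t(u)\circ du\circ j,$$
so the only new direction compared with the fixed-structure operator is the last term, produced by perturbing the path at the single time $t$. For $t\in(0,1)$ the value $Y_t$ can be prescribed arbitrarily, and I would argue as in McDuff--Salamon: any $\eta$ in the annihilator of $\mathrm{im}\,D\mathcal{F}$ lies in $\coker D\bar\partial_{J_t}u$ and annihilates $Y_t(u)\circ du\circ j$ for all admissible $Y_t$; since $u$ is immersed and somewhere injective, this forces $\eta$ to vanish on a nonempty open set, and unique continuation for the formal adjoint of the elliptic operator $D\bar\partial_{J_t}$ then gives $\eta\equiv0$. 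At the frozen endpoint $t=0$ (and at $t=1$, if the path is required to return to $J_0$) the perturbation $Y_t$ is unavailable, but there $J_0\in\mathcal{J}_{reg}$ already guarantees that $D\bar\partial_{J_0}u$ is surjective; hence $D\mathcal{F}$ is onto at the endpoints as well. This yields the Banach manifold structure on $\mathcal{M}^{univ}$.

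With this in hand I would study the projection $\Pi:\mathcal{M}^{univ}\to\mathcal{P}$, $(t,u,\{J_s\})\mapsto\{J_s\}$. It is Fredholm, with index one greater than that of $D\bar\partial_{J_t}$ owing to the extra $t$-factor, so the Sard--Smale theorem shows that its regular values form a comeager, hence dense, subset of $\mathcal{P}$. For a path $\{J_s\}$ that is such a regular value, every solution $(t,u)$ sitting over it is cut out transversally by the parametrized operator, which is the content of the assertion that $\{J_t\}_{t\in[0,1]}\subset\mathcal{J}_{reg}$. Finally, a standard Taubes-type argument passes from the $C^\varepsilon$ category, where Sard--Smale applies, to the genuinely smooth based paths, giving the claimed generic family.

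The main obstacle is the surjectivity of $D\mathcal{F}$ in the second step. Two points require genuine care: verifying somewhere-injectivity of the immersed holomorphic discs with boundary on the immersed Lagrangians $L$ and $L_t$, so that the $J$-perturbation term $Y_t(u)\circ du\circ j$ sweeps out a subspace large enough to annihilate any cokernel vector; and handling the endpoint $t=0$, where no $J$-variation is allowed and one must instead invoke the hypothesis $J_0\in\mathcal{J}_{reg}$. Once transversality of $\mathcal{F}$ is established, the Fredholmness of $\Pi$ and the Sard--Smale application are routine.
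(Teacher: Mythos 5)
Your proposal follows essentially the same route as the paper: a universal Cauchy--Riemann operator over the based path/loop space of almost complex structures, transversality of its zero set, and the Sard--Smale theorem applied to the projection onto the space of paths. In fact you supply more detail than the paper does on the one nontrivial point --- surjectivity of the linearized universal operator via somewhere-injectivity and unique continuation, including the frozen endpoint where only the hypothesis $J_0\in\mathcal{J}_{reg}$ is available --- which the paper simply asserts.
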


\begin{proof}
    Denote the space of loops of $\mathcal{J}$ based at $J_0$ as $\mathcal{LJ}$. Consider the operator
    \begin{align*}
        \bar\partial_{[0,1]}:W^{1,p}_{x_\pm^{[0,1]}}(\D,M)\times\mathcal{LJ}&\rightarrow L^p_{[0,1]}\\
        (\{u^t\}_t,\{J_t\}_t)&\mapsto \bar\partial_{J_t}u^t.
    \end{align*}
    This is a Fredholm operator with 0 being a regular value. The projection
    \[
    \pi_{\mathcal{LJ}}:W^{1,p}_{x_\pm^{[0,1]}}(\D,M)\times\mathcal{LJ}\supset\bar\partial_{[0,1]}^{-1}(0)\rightarrow\mathcal{LJ}
    \]
    is also Fredholm. According to the Sard-Smale theorem for Bananch spaces (c.f \cite{mcduff2012j} Theorem A.5.1), the regular value of $\pi_{\mathcal{LJ}}$ is generic. Let $\{J_t\}_{t\in[0,1]}$ be a generic value of $\pi_{\mathcal{LJ}}$, then $J_t\in\mathcal{J}_{reg}$, $\forall t\in[0,1]$. This completes the proof of the theorem.
\end{proof}

\section{Pearly tree immersed Lagarangian Floer theory}
This section gives the definition of pearly tree immersed Lagrangian Floer chain group and its boundary map. Some descriptions of the boundary maps are presented at the end of this section.\\

\begin{defn}
    Let $(M^n,\omega)$ be a symplectic manifold and $g:L\looparrowright M$ be a Lagrangian immersion. A \textbf{morse function $f$ of $L\looparrowright M$} is a morse function defined on $L$ whose critical points are different from the self-intersections of $L$. The set of critical points of $f$ is denoted as $\Crit(f)$.
\end{defn}

\begin{defn}
    Let $(M,\omega)$ be a symplectic manifold and $g:L\looparrowright M$ be a Lagrangian immersion. The set of \textbf{ordered pair of self-intersections of $L$} is defined as
    \[
    R:=\{(p,q)\in L\times L|\ g(p)=g(q)\}.
    \]
\end{defn}

\begin{rmk}
    Notice that $(p,q)\ne(q,p)$.
\end{rmk}

\begin{defn}
    Let $(M,\omega)$ be a symplectic manifold and $L\looparrowright M$ be a Lagrangian immersion. Given a morse function $f$ of $L\looparrowright M$, the \textbf{pearly tree Lagrangian Floer chain group} $\CF^P(L)$ is generated by the points in $\Crit(f)$ and the ordered pair of self-intersections of $L$. 
\end{defn}


The definition of the boundary map of a pearly tree Lagrangian Floer chain group follows from Alston-Bao \cite{alston2021immersed}.

\begin{defn}
    Let $(M,\omega)$ be a symplectic manifold equipped with a compatible almost complex structure $J$ and $g:L\looparrowright M$ be a Lagrangian immersion. A $J$-holomorphic disc $u(s,t):\D\rightarrow M$ is said to \textbf{have boundary in $L$}, if $u|_{\partial\D-(\pm1,0)}$ lifts to a smooth map
    \[
    \Tilde{u}:\partial\D-(\pm1,0)\rightarrow L
    \]
    such that 
    \[
    \lim_{\theta\rightarrow\pi^-}g(\Tilde{u}(e^{i\theta}))=\lim_{\theta\rightarrow\pi^+}g(\Tilde{u}(e^{i\theta}))=u(-1,0), \lim_{\theta\rightarrow0^+}g(\Tilde{u}(e^{i\theta}))=\lim_{\theta\rightarrow0^-}g(\Tilde{u}(e^{i\theta}))=u(1,0).
    \]
    The point $u(-1,0)$ $(resp.u(1,0))$ is called \textbf{branch switching}, if $\lim_{\theta\rightarrow\pi^-}\Tilde{u}(e^{i\theta})\ne$ $\lim_{\theta\rightarrow\pi^+}\Tilde{u}(e^{i\theta})$ $(resp.\lim_{\theta\rightarrow0^-}\Tilde{u}(e^{i\theta})\ne\lim_{\theta\rightarrow0^+}\Tilde{u}(e^{i\theta}))$. Otherwise, $u(\pm1,0)$ is called \textbf{branch non-swtiching}.
\end{defn}

\begin{defn}\label{defn:pearlydisc}
    Let $(M,\omega)$ be a symplectic manifold equipped with a compatible almost complex structure $J$ and $g:L\looparrowright M$ be a Lagrangian immersion. Assume that $f$ is a morse function of $L$ such that $\Crit(f)$ does not contain self-intersections of $L$. Given generators $x,y$ of $\CF^P(L)$, the \textbf{pearly tree discs from $x$ to $y$} are categorized into the following 4 types:
    \begin{itemize}
        \item[Type1:] If $x,y\in\Crit(f)$, then the pearly tree disc is a morse trajectory from $x$ to $y$.
        \item[Type2:] If $x\in\Crit(f)$ and $y=(p,q)$ is an ordered pair of self-intersections, then the pearly tree disc is a morse trajectory $l$ followed by a holomorphic disc $u$, i.e.
        \[
        l:(-\infty,a]\rightarrow L,\ u:\D\rightarrow M,\ \ a\in\R
        \]
        such that $l$ starts at $x$, $g(l(a))=u(-1,0)$, $u(-1,0)$ is branch non-switching, and $u(1,0)$ is branch switching. Moreover, the lift $\Tilde{u}$ of $u|_{\partial\D-(\pm1,0)}$ satisfies $\lim_{\theta\rightarrow0^+}\Tilde{u}(e^{i\theta})=p$ and $\lim_{\theta\rightarrow0^-}\Tilde{u}(e^{i\theta})=q$.
        \item[Type3:] If $x=(p,q)$ is an ordered pair of self-intersections and $y\in\Crit(f)$, then the pearly tree disc is a holomorphic disc $u$ followed by a morse trajectory $l$, i.e.
        \[
        u:\D\rightarrow M,\ l:[a,+\infty)\rightarrow L,\ \ a\in\R
        \]
        such that $l$ ends at $y$, $g(l(a))=u(1,0)$, $u(-1,0)$ is branch switching, and $u(1,0)$ is branch non-switching. Moreover, the lift $\Tilde{u}$ of $u|_{\partial\D-(\pm1,0)}$ satisfies $\lim_{\theta\rightarrow\pi^-}\Tilde{u}(e^{i\theta})=p$ and $\lim_{\theta\rightarrow\pi^+}\Tilde{u}(e^{i\theta})=q$.
        \item[Type4:] If $x=(p,q)$ and $y=(p',q')$ are both ordered pair of self-intersections, then the pearly tree disc is a holomorphic disc $u$ such that $u(\pm1,0)$ are both branch switching. Moreover, the lift $\Tilde{u}$ of $u|_{\partial\D-(\pm1,0)}$ satisfies $\lim_{\theta\rightarrow\pi^-}\Tilde{u}(e^{i\theta})=p$, $\lim_{\theta\rightarrow\pi^+}\Tilde{u}(e^{i\theta})=q$, $\lim_{\theta\rightarrow0^-}\Tilde{u}(e^{i\theta})=p'$, and $\lim_{\theta\rightarrow0^+}\Tilde{u}(e^{i\theta})=q'$.
    \end{itemize}
    The moduli space of the above 4 types of pearly tree discs connecting $x$ and $y$ modulo the $\R$ translation is denoted as $\mathcal{M}^P(x,y)$.
\end{defn}

\begin{figure}[H]
\centering 
\includegraphics[width=0.8\textwidth]{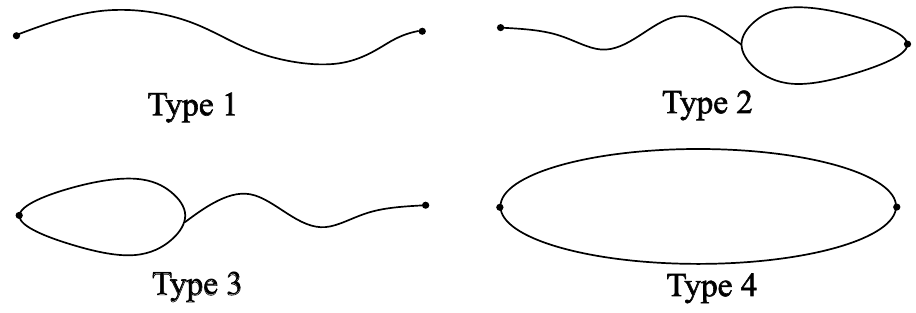}
\caption{The four types of pearly tree discs. The left dots are inputs and the right dots are outputs.}
\end{figure}

\begin{defn}
    Let $(M,\omega)$ be a symplectic manifold equipped with an almost complex structure $J\in\mathcal{J}_{reg}$ and $g:L\looparrowright M$ be a Lagrangian immersion. Given a morse function of $L$, the boundary map of the corresponding pearly tree Lagrangian Floer chain group $(\CF^P(L),\partial^P)$ is defined as
    \begin{align*}
        \partial^P:\CF^P(L)&\rightarrow \CF^P(L)\\
        \partial^P(x)&=\sum_{y\in\CF^P(L),\dim\mathcal{M}^P(x,y)=0}\#\mathcal{M}^P(x,y)y.
    \end{align*}
\end{defn}

To understand the Type 2 and Type 3 pearly tree discs in Definition \ref{defn:pearlydisc}, we have the following theorem.

\begin{them}
    Let $(M,\omega)$ be a symplectic manifold equipped with an almost complex structure $J\in\mathcal{J}_{reg}$ and $g:L\looparrowright M$ be a Lagrangian immersion. Given a morse function of $L$, denote the corresponding pearly tree Lagrangian Floer chain group as $\CF^P(L)$. Suppose $x,y$ are generators of $\CF^P(L)$.
    \begin{itemize}
        \item If there is a morse trajectory starting at $x$ and the moduli space $\mathcal{M}^P(x,y)\ne\emptyset$ is of Type 2 in Definition \ref{defn:pearlydisc}, then $\dim\mathcal{M}^P(x,y)\ge1$. 
        \item If there is a morse trajectory ending at $y$ and the moduli space $\mathcal{M}^P(x,y)\ne\emptyset$ is of Type 3 in Definition \ref{defn:pearlydisc}, then $\dim\mathcal{M}^P(x,y)\ge1$. 
    \end{itemize}
\end{them}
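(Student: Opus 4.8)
I will prove the Type 2 statement; the Type 3 statement follows by the mirror argument, reversing the direction of the gradient flow and reflecting the disc so that the switching corner becomes the input and the non-switching output is attached to a flow line asymptotic to $y$. The plan is to realize $\mathcal{M}^P(x,y)$ as a transverse fibre product of a gradient-trajectory space with a disc moduli space, and then to locate an extra parameter forced by the non-switching junction.

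First I would decompose a Type 2 element into its gradient trajectory $l:(-\infty,a]\to L$, with $l$ asymptotic to $x$ as $s\to-\infty$, and its disc $u$ (switching output to $(p,q)$, non-switching input), recorded so that the admissible trajectories are parametrized by the unstable manifold $W^u(x)$ via $l\mapsto l(a)$, and the admissible discs by the moduli $\mathcal{M}_1$ of $J$-holomorphic discs carrying a single switching corner at the $(p,q)$ end. The hypothesis that a morse trajectory starts at $x$ is precisely the statement $\dim W^u(x)=\mathrm{ind}(x)\ge1$, so that the junction point genuinely varies rather than collapsing onto $x$. The matching condition $g(l(a))=u(-1,0)$ then presents $\mathcal{M}^P(x,y)$ as $W^u(x)\times_L\mathcal{M}_1$, where one evaluation is the inclusion of $W^u(x)$ into $L$ and the other is evaluation of the lifted input point.

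The conceptual heart is that the input is branch non-switching, so $u(-1,0)$ is a smooth boundary point lying on a single branch of $L$ rather than a corner; it is therefore a genuine free boundary marked point. Adding such a point raises the dimension of the disc factor by one, whereas the quotient only removes the translations fixing both $\pm1$, and the matching constraint removes at most $\mathrm{codim}_L W^u(x)=n-\mathrm{ind}(x)$. I would make this precise by using the one-parameter subgroup of $\mathrm{Aut}(\D)$ that fixes $+1$, keeping the corner intact, while moving $-1$ along the smooth boundary arc, so that the marked-point modulus is transverse to the $\R$-translation we divide by. Invoking the regularity produced earlier for generic $J$ (and the family version, Theorem \ref{thm:regfam}) together with a generic choice of $f$ to make the two evaluation maps mutually transverse, the fibre product is cut out with its expected dimension, into which the surviving marked-point parameter feeds a strictly positive contribution, giving $\dim\mathcal{M}^P(x,y)\ge1$.

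The main obstacle is the dimension bookkeeping in this last step: one must show that the extra marked-point parameter is neither absorbed by the $\R$-quotient nor cancelled by the matching constraint against $W^u(x)$, i.e., that the disc index, the unstable-manifold dimension, and the free point combine to a total that is at least one rather than merely non-negative. The cleanest check is through the $\epsilon\to0$ degeneration of Hamiltonian strips, under which a Type 2 disc is exactly the limit of a strip from the Morse-type intersection point near $x$ to the self-intersection point indexed by $(p,q)$; the bound $\dim\ge1$ is then the statement that this strip index is at least two, and verifying that the non-switching attachment always supplies the extra unit — uniformly in the position of the junction and compatibly with transversality — is the delicate point I expect to absorb most of the work.
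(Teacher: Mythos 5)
Your proposal has a genuine gap at exactly the step you flag as ``the delicate point I expect to absorb most of the work'': you never actually show that the extra unit of dimension survives. Framing $\mathcal{M}^P(x,y)$ as a transverse fibre product $W^u(x)\times_L\mathcal{M}_1$ and invoking non-emptiness only yields $\dim\ge 0$; the $+1$ contributed by the free (non-switching) boundary marked point can in principle be cancelled by the codimension-$(n-\mathrm{ind}(x))$ matching condition against $W^u(x)$, and ruling that out is precisely the content of the theorem. Deferring it to an $\epsilon\to0$ degeneration of Hamiltonian strips and an unproven claim that ``this strip index is at least two'' does not close the gap, and would in any case be circular in the logic of the paper, where this theorem (via Corollary \ref{cor:nomorsetra}) is an input to the comparison with the Hamiltonian theory rather than a consequence of it. Your setup also requires a generic choice of $f$ to make the evaluation maps transverse, which is an extra hypothesis not present in the statement.

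The paper closes the gap by a different and more elementary mechanism, due to Floer: take the \emph{given} configuration, with junction point $w(0)=l(a)$, and slide the point constraint along the (non-constant, since $\mathrm{ind}(x)\ge1$) gradient trajectory to obtain a one-parameter family $w(t)$. Regularity of $J$ makes the Cauchy--Riemann operator surjective at $t=0$, and surjectivity is open, so the parametrized solution space $\bigcup_{t\in[0,\delta)}\Tilde{\mathcal{M}}(w(t),y)$ is a smooth manifold near the given solution, of dimension equal to the fibre dimension plus one. The fibre at $t=0$ is nonempty, regular, and carries a free $\R$-reparametrization action, hence has dimension at least $1$; the total space therefore has dimension at least $2$, and quotienting by the $\R$-translation gives $\dim\mathcal{M}^P(x,y)\ge1$. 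No index bookkeeping, no marked-point count, and no transversality of evaluation maps in $L$ is needed, because one only perturbs the point constraint in the single flow-line direction where the implicit function theorem applies automatically. If you want to salvage your approach, you would need to supply exactly this argument (or an equivalent index identity) to justify that the marked-point parameter is not absorbed by the matching constraint.
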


\begin{proof}
    Because the arguments for the both claims in the theorem are similar, the proof of the first one is presented here. The idea of proof is derived from Floer \cite{floer1988morse}. More specifically, suppose that there is a morse trajectory starting at $x$. Then we can move the point connecting the morse trajectory and the holomorphic disc along the morsed trajectory. Denote the trajectory of this connecting point as $w(t)$. This defines a family of holomorphic discs connecting $w(t)$ and $y$. Therefore, the dimension of the moduli space is at least 1.\\
    Since by assumption $\mathcal{M}^P(x,y)$ is non-empty, there is a pearly trajectory of Type 2 in Definition \ref{defn:pearlydisc}. Let $l:(-\infty,a]\rightarrow M$ be the morse trajectory part and $u:\D\rightarrow M$ be the holomorphic disc part. Denote the connecting point $l(a)$ of $l$ and $u$ as $w(0)$. Then $u$ is a holomorphic disc connecting $w(0)$ and $y$. Moving $w(0)$ along the morse trajectory $l$, we obtain a path $w(t)$ starting at $w(0)$. Define $\Tilde{\mathcal{M}}(w(t),y)$ as the space of holomorphic discs connecting $w(t)$ and $y$ with boundary in $L$.
    
    Before analyzing the dimension of the moduli space, some transversality results are needed. Let $W^{1,p}_{w(t),y}(\D,M)$ be the Sobolev $W^{1,p}$ space of all immersed holomorphic discs with boundary in $L$ connecting $w(t)$ and $y$, for $p\in(2,\infty)$. For $u^t\in W^{1,p}_{w(t),y}(\D,M)$, let $L^p(\D,\Omega^{0,1}\otimes (u^t)^*TM)$ denote the Sobolev $L^p$ completion of the sections of the bundle $\Omega^{0,1}\otimes (u^t)^*TM\rightarrow \D$, where $\Omega^{0,1}$ is the bundle of $(0,1)$-forms over $\D$. Then $\Tilde{\mathcal{M}}(w(t),y)$ is the solution space of the Cauchy-Riemann equation from $W^{1,p}_{w(t),y}(\D,M)$ to $L^p(\D,\Omega^{0,1}\otimes (u^t)^*TM)$. Define
    \[
    W^{1,p}_{w([0,b)),y}(\D,M):=\bigcup_{t\in[0,b)}W^{1,p}_{w(t),y}(\D,M),\quad L^p_{[0,b)}:=\bigcup_{t\in[0,b)}L^p(\D,\Omega^{0,1}\otimes (u^t)^*TM).
    \]
    The above spaces are Banach manifolds, and the Cauchy-Riemann operator 
    \begin{align*}
        \mathcal{F}:W^{1,p}_{w([0,b)),y}(\D,M)&\rightarrow L^p_{[0,b)}\\
        u^t&\mapsto\bar\partial_Ju^t
    \end{align*}
    is regular at $t=0$, since $J\in\mathcal{J}_{reg}$ by assumption. More precisely, the linearization of $\mathcal{F}$ is surjective at $t=0$. Because this surjectivity is an open condition, there is a small enough number $\delta>0$ such that the linearization of $\mathcal{F}$ is surjective for $0\le t<\delta$. That is, $\mathcal{F}$ is regular for $t\in[0,\delta)$. Therefore, the space $\Tilde{\mathcal{M}}(w([0,\delta)),y):=\mathcal{F}^{-1}(0)=\bigcup_{t\in[0,\delta)}\Tilde{\mathcal{M}}(w(t),y)$ is a smooth manifold when $b=\delta$.

    The space $\Tilde{\mathcal{M}}(w(0),y)$ is non-empty by assumption. Since $J$ is regular and by the argument from the previous paragraph, we have:
    \[
    \dim\Tilde{\mathcal{M}}(w(t),y)\ge1,\ \ t\in[0,\delta).
    \]
    Thus, $\dim\Tilde{\mathcal{M}}(w([0,\delta)),y)\ge2$. By modding out the $\R$-translation from $\Tilde{\mathcal{M}}(w([0,\delta)),y)$, the resulting moduli space $\mathcal{M}(w([0,\delta)),y)$ has dimension at least 1. Since $\mathcal{M}(w([0,\delta)),y)$ is a part of the molduli space of $\mathcal{M}^P(x,y)$, we conclude that $\dim\mathcal{M}^P(x,y)\ge1$.
\end{proof}

\begin{cor}\label{cor:nomorsetra}
    Let $(M^n,\omega)$ be a symplectic manifold equipped with an almost complex structure $J\in\mathcal{J}_{reg}$ and $g:L\looparrowright M$ be a Lagrangian immersion. Given a morse function $f$ of $L$, denote the corresponding pearly tree Lagrangian Floer chain group as $\CF^P(L)$. Suppose $x,y$ are generators of $\CF^P(L)$.
    \begin{itemize}
        \item If the morse index of $x\in\Crit(f)$ is strictly greater than $0$ and $y$ is an ordered pair of self-intersections, then the pearly tree discs from $x$ to $y$ does not contribute to the boundary map of $\CF^P(L)$. In the case where the index of $x\in\Crit(f)$ is $0$, then the morse trajectory part of the pearly tree disc from $x$ to $y$ is constant.
        \item If the morse index of $y\in\Crit(f)$ is strictly smaller than $n$ and $x$ is an ordered pair of self-intersections, then the pearly tree discs from $x$ to $y$ does not contribute to the boundary map of $\CF^P(L)$. In the case where the index of $x\in\Crit(f)$ is $n$, then the morse trajectory part of the pearly tree disc from $x$ to $y$ is constant.
    \end{itemize}
\end{cor}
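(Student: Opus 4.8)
The plan is to reduce both bullets to the preceding theorem by translating the Morse-index hypotheses into statements about the existence of non-constant gradient trajectories. Recall that for the negative gradient flow of $f$ the unstable manifold $W^u(x)$ of a critical point $x$ has dimension equal to the Morse index of $x$, while the stable manifold $W^s(y)$ of $y$ has dimension $n-\mathrm{ind}(y)$. Consequently a non-constant morse trajectory emanating from $x$ (one with $\lim_{t\to-\infty}l(t)=x$) exists if and only if $\dim W^u(x)>0$, i.e. $\mathrm{ind}(x)>0$; dually, a non-constant morse trajectory terminating at $y$ (one with $\lim_{t\to+\infty}l(t)=y$) exists if and only if $\dim W^s(y)>0$, i.e. $\mathrm{ind}(y)<n$. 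These two equivalences are the bridge between the index conditions in the statement and the trajectory hypotheses in the preceding theorem.

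For the first bullet I would argue as follows. Suppose $\mathrm{ind}(x)>0$ and that $\mathcal{M}^P(x,y)$ is non-empty of Type 2, so there is a pearly disc whose disc-attaching point lies in $W^u(x)$. Since $\dim W^u(x)>0$ there is a non-constant trajectory starting at $x$, so the hypothesis ``there is a morse trajectory starting at $x$'' of the preceding theorem holds, giving $\dim\mathcal{M}^P(x,y)\ge 1$. Because $\partial^P$ sums only over those $y$ with $\dim\mathcal{M}^P(x,y)=0$, such pearly discs make no contribution to the boundary map. When instead $\mathrm{ind}(x)=0$, the point $x$ is a local minimum and $W^u(x)=\{x\}$, so no non-constant trajectory starts at $x$; the morse trajectory part $l$ of any Type 2 pearly disc from $x$ is therefore forced to be the constant path at $x$, and the disc attaches directly at $g(x)$.

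The second bullet is the mirror image, using Type 3 pearly discs and the second clause of the preceding theorem. If $\mathrm{ind}(y)<n$ then $\dim W^s(y)>0$, so there is a non-constant trajectory ending at $y$; the theorem gives $\dim\mathcal{M}^P(x,y)\ge 1$ and the discs do not contribute to $\partial^P$. If $\mathrm{ind}(y)=n$ then $y$ is a local maximum, $W^s(y)=\{y\}$, and the morse trajectory part must be constant. I note in passing that the displayed phrase ``the index of $x\in\Crit(f)$ is $n$'' should read ``the index of $y$'', since $x$ is an ordered pair of self-intersections in the Type 3 case.

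The step I expect to require the most care is justifying that even a Type 2 disc whose morse part happens to be constant lies in a positive-dimensional moduli space when $\mathrm{ind}(x)>0$, so that the ``no contribution'' conclusion is uniform over all representatives rather than only those with non-constant morse part. The resolution is the deformation already used in the preceding theorem: because $W^u(x)$ is positive-dimensional, one may push the disc-attaching point off $x$ along a non-constant trajectory in $W^u(x)$, and the regularity of $J$ (through the open surjectivity of the linearized Cauchy-Riemann operator established there) guarantees that the disc persists, producing a genuine one-parameter family after modding out the $\R$-translation. This is precisely what upgrades ``there exists a non-constant trajectory from $x$'' to ``$\dim\mathcal{M}^P(x,y)\ge 1$'' regardless of the morse part of any individual element, and the symmetric remark applies to $W^s(y)$ in the Type 3 case.
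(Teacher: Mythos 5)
Your proposal is correct and follows essentially the route the paper intends: the corollary is stated as an immediate consequence of the preceding theorem, with the only missing link being the standard Morse-theoretic fact that non-constant trajectories emanating from $x$ (resp.\ terminating at $y$) exist precisely when $\dim W^u(x)=\mathrm{ind}(x)>0$ (resp.\ $\dim W^s(y)=n-\mathrm{ind}(y)>0$), which you supply, and the observation that $\partial^P$ only counts zero-dimensional moduli spaces. Your parenthetical remark that the second bullet should refer to the index of $y$ rather than $x$ is also a correct reading of the statement.
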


\section{Hamiltonian immersed Lagrangian Floer theory}
This section gives the definition of Hamiltonian immsersed Lagrangian Floer chain group and its boundary map. The following theorme is the starting point.

\begin{them}[Weinstein Tubular Neighbourhood Theorem]\label{thm:tubu}{\rm \cite{eliashberg2002introduction}}
Let $(M,\omega)$ be a symplectic manifold. Assume that $f:L\rightarrow M$ is a Lagrangian immersion. Then there is a local symplectomorphism $G$ from a tubular neighbourhood $T^*_\varepsilon L$ of the zero section of the cotangent bundle $T^*L$ to a neighbourhood $U\supset f(L)$ as in the following commutative diagram:
\begin{equation*}
    \xymatrix{
    T_\varepsilon^*L \ar[dr]^G & \\
    L \ar[r]^{f\ \ \ } \ar[u] &U\subset M.}
\end{equation*}
\end{them}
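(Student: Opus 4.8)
The statement is a Lagrangian Weinstein neighbourhood theorem for an immersion, so the natural approach is the Moser homotopy method, adapted to the immersed setting by performing every construction on the total space $T^*L$ and pulling all structures back along $f$. The plan is to manufacture an initial local diffeomorphism extending $f$ whose pullback of $\omega$ matches the canonical form $\omega_{\mathrm{can}}$ along the zero section, and then to correct it to an exact symplectomorphism by an isotopy fixing the zero section.

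First I would set up the linear/bundle picture over $L$. Since $f$ is a Lagrangian immersion, $df$ identifies $TL$ with a Lagrangian subbundle $df(TL)$ of the symplectic vector bundle $(f^*TM, f^*\omega)$ over $L$. Choosing an $\omega$-compatible almost complex structure $J$ on $M$ and the associated metric $g_M = \omega(\cdot, J\cdot)$, the subbundle $N := J\,df(TL)$ is a Lagrangian complement to $df(TL)$ in $f^*TM$, and the fibrewise map $N \to T^*L$, $v \mapsto \omega(v, df(\cdot))$, is an isomorphism. This is the canonical identification of the ``normal bundle'' of the immersion with $T^*L$. Next I would produce the initial map: for $(q,p) \in T^*_\varepsilon L$ let $\hat p \in N_q$ be the preimage of $p$ under this isomorphism and set $\Phi_0(q,p) = \exp^{M}_{f(q)}(\hat p)$. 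At a zero-section point $(q,0)$ the differential $d\Phi_0$ carries $T_qL \oplus T^*_qL$ isomorphically onto $df(T_qL) \oplus N_q = T_{f(q)}M$, so $\Phi_0$ is a local diffeomorphism near the zero section with $\Phi_0|_L = f$, and a direct computation in this splitting shows $\Phi_0^*\omega = \omega_{\mathrm{can}}$ along the zero section once signs in the identification are fixed.

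Then comes the Moser step. Put $\omega_0 = \omega_{\mathrm{can}}$ and $\omega_1 = \Phi_0^*\omega$; both are symplectic near the zero section and agree there, so $\omega_t = (1-t)\omega_0 + t\omega_1$ is symplectic on a possibly smaller $T^*_\varepsilon L$ for all $t \in [0,1]$. The closed form $\omega_1 - \omega_0$ vanishes along $L$, so the relative Poincaré lemma (via the fibrewise radial retraction onto the zero section) yields a $1$-form $\alpha$ with $d\alpha = \omega_1 - \omega_0$ and $\alpha = 0$ along $L$. Defining $X_t$ by $\iota_{X_t}\omega_t = -\alpha$ gives a time-dependent vector field vanishing on $L$, whose flow $\psi_t$ fixes the zero section and satisfies $\psi_1^*\omega_1 = \omega_0$. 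Setting $G = \Phi_0 \circ \psi_1$ then yields $G^*\omega = \omega_{\mathrm{can}}$ and $G|_L = f$, and $G$ being a local diffeomorphism, its image is an open neighbourhood $U \supset f(L)$, which is the commutative diagram in the statement.

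The main obstacle, and the only place where the immersed (rather than embedded) hypothesis must be treated with care, is ensuring that nothing in the construction is sensitive to the self-intersections of $f(L)$. This holds because the bundle isomorphism, the primitive $\alpha$, and the Moser flow $\psi_t$ all live on $T^*_\varepsilon L$ and depend only on data pulled back over $L$; the immersion enters solely through $\Phi_0$. Consequently $G$ is a local symplectomorphism (a symplectic local diffeomorphism) and need not be globally injective, which is exactly the conclusion asserted. The two technical points I expect to require genuine care are the exact, not merely first-order, matching $\Phi_0^*\omega = \omega_{\mathrm{can}}$ along the zero section, and the vanishing of $\alpha$ along $L$ in the relative Poincaré lemma, since it is precisely these that force $\psi_t$ to fix the zero section and hence $G$ to restrict to $f$.
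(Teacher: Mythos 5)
The paper offers no proof of this theorem---it is quoted directly from \cite{eliashberg2002introduction}---and your argument is the standard Weinstein/Moser proof that such references give, correctly adapted to the immersed case by performing every construction on the total space $T^*_\varepsilon L$ and letting the immersion enter only through the exponential-map identification $\Phi_0$. The outline is correct; the only points needing written detail are the ones you already flag (the exact agreement $\Phi_0^*\omega=\omega_{\mathrm{can}}$ along the zero section and the vanishing of the primitive $\alpha$ along $L$), plus, if $L$ is noncompact, letting $\varepsilon$ be a positive function on $L$ so that the Moser flow exists up to time $1$.
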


\begin{defn}
    Let $(M,\omega)$ be a symplectic manifold and $g:L\looparrowright M$ be a Lagrangian immersion. Let $U$ be an open neighbourhood of $L$ that can be identified with $T_\varepsilon^*L$ as in Theorem \ref{thm:tubu}. A \textbf{local Hamiltonian function} is a smooth function $H:T_\varepsilon^*L\rightarrow\R$. The function $H$ is called \textbf {non-degenerate} if $H|_L$ is a morse function and the self-intersections of $L$ are not the critical points of $H|_L$. Denote the set of all non-degenerate local Hamiltonian functions that are \textbf{constant in the fiber directions} as $\mathcal{H}^{cf}$.
\end{defn}

\begin{defn}
   Let $(M,\omega)$ be a symplectic manifold and $g:L\looparrowright M$ be a Lagrangian immersion. Let $U$ be an open neighbourhood of $L$ that can be identified with $T_\varepsilon^*L$ as in Theorem \ref{thm:tubu}. Given a local Hamiltonian function $H:T_\varepsilon^*L\rightarrow\R$, the \textbf{local Hamiltonian vector field} $X$ of $H$ is a vector field over $T_\varepsilon^*L$ such that
   \[
   \iota_X(G^*\omega)=dH.
   \]
   The \textbf{local Hamiltonian flow} $\{\phi_t\}_{t\ge0}$ of $H$ is a smooth family of diffeomorphisms of $T_\varepsilon^*L$ such that
   \[
   \Dot{\phi_t}=X.
   \]
   Assume that $f:N\rightarrow M$ is an immersion, then $\phi_\epsilon\circ f:N\rightarrow M$ is the immersion after performing the Hamiltonian flow. To distinguish these two domains, we use $N_{\phi_\epsilon}$ to denote the domain of $\phi_\epsilon\circ f$.
\end{defn}

\begin{defn}
    Let $(M,\omega)$ be a symplectic manifold and $g:L\looparrowright M$ be a Lagrangian immersion. Given a Hamiltonian flow $\phi_t$ on $M$ such that $\phi_\epsilon(g(L))$ and $g(L)$ intersects transversely, the \textbf{Hamiltonian Lagrangian Floer chain group $\CF^H(L,L_{\phi_\epsilon})$} is generated by the generalized intersections of $L_{\phi_\epsilon}$ and $L$.
\end{defn}

\begin{defn}
    Let $(M,\omega)$ be a symplectic manifold equipped with a compatible almost complex structure $J$ and $g:L\looparrowright M$ be a Lagrangian immersion. Suppose $\{\phi_t\}$ is a family of Hamiltonian flow. Let $u(s,t):\D\rightarrow M$ be a $J$-holomorphic disc having boundary in $L$ and $L_{\phi_\epsilon}$. Assume $u(-1,0)=x$ and $u(1,0)=y$, then $u$ is said to \textbf{connecting $x$ and $y$}. The moduli space of all $J$-holomorphic disc having boundary in $L$ and $L_{\phi_\epsilon}$ connecting $x$ and $y$ is denoted as $\Tilde{\mathcal{M}}^H(x,y)$. Define $\mathcal{M}^H(x,y)$ as the quotient of $\Tilde{\mathcal{M}}^H(x,y)$ by the $\R$-translation.
\end{defn}

\begin{defn}
    Let $(M,\omega)$ be a symplectic manifold equipped with a compatible almost complex structure $J$ and $g:L\looparrowright M$ be a Lagrangian immersion. Assume $\CF^H(L,L_{\phi_\epsilon})$ is the Hamiltonian Lagrangian Floer chain group defined by the family of Hamiltonian flow $\{\phi_t\}$. The boundary map of $\CF^H(L,L_{\phi_\epsilon})$ is defined as
    \begin{align*}
        \partial^H:\CF^H(L,L_{\phi_\epsilon})&\rightarrow\CF^H(L,L_{\phi_\epsilon})\\
        \partial^H(x)&=\sum_{y\in\CF^H(L),\dim\mathcal{M}^H(x,y)=0}\#\mathcal{M}^H(x,y)y.
    \end{align*}
\end{defn}

\section{Proof of the main theorem}
This section aims to prove the main theorem. A natural step is to construct a map from the pearly tree Lagrangian Floer theory to the Hamiltonian Lagrangian Floer theory. Let $f$ be the morse function of $L$ used to define the pearly tree Lagrangian Floer chain group $(\CF^P(L),\partial^P)$. The local Hamiltonian function required to define the Hamiltonian Lagrangian Floer theory is obtained by extending $f$ constantly in the normal direction within the Weinstein tubular neighbourhood of $L$.

The reverse direction is more challenging, as not all local Hamiltonian functions are constant extensions of a morse function defined on $L$ in the Weinstein tubular neighbourhood of $L$. Suppose the given local Hamiltonian Lagrangian Floer chain group is defined by a non-degenerate local Hamiltonian function $H$. Let $(\CF^P(L),\partial^P)$ be the pearly tree Lagrangian Floer chain group defined by $H|_L$, since $H|_L$ is a morse function of $L$. The idea is to construct an intermediate Hamiltonian Lagrangian Floer chain group using a new local Hamiltonian function $H_1\in\mathcal{H}^{cf}$, where $H_1$ is the constant extension of $H|_L$ in the fiber direction within the Weinstein tubular neighbourhood of $L$. Denote $\phi_t^1$ as the local Hamiltonian flow of $H_1$. We show that $(\CF^{H_1}(L,L_{\phi_\epsilon^1}),\partial^{H_1})$ can be identified with both $(\CF^P(L),\partial^P)$ and $(\CF^{H}(L,L_{\phi_\epsilon}),\partial^{H})$ for $\epsilon>0$ sufficently small, where $\phi_t$ is the local Hamiltonian flow of $H$.

To identify $(\CF^{H_1}(L,L_{\phi_\epsilon^1}),\partial^{H_1})$ and $(\CF^{H}(L,L_{\phi_\epsilon}),\partial^{H})$, the key result is Lemma \ref{lem:famlag}. Lemma \ref{lem:famlag} describes the behavior of holomorphic curves with boundary in $L$ and $L_t$ as $t$ varies, where $\{L_t\}_{t\in[0,1]}$ is a smooth family of Lagrangian immersions in $(M,\omega)$ starting at $L_{\phi_\epsilon}$ and ending at $\phi_{L_\epsilon^1}$. The proof of Lemma \ref{lem:famlag} follows the arguments in Floer \cite{floer1988morse}.

To prove the identification of $(\CF^P(L),\partial^P)$ and $(\CF^{H_1}(L,L_{\phi_\epsilon^1}),\partial^{H_1})$, the bijection of the generators of the pearly tree Lagrangian Floer chain group can be naturally constructed. Subsequently, we show that the boundary maps of these two chain groups are bijective. The proof is divided into three cases: 
\begin{itemize}
    \item The boundary map in the pearly tree Lagrangian Floer chain group counts pearly tree discs of Type 1 in Definition \ref{defn:pearlydisc}.
    \item The boundary map in the pearly tree Lagrangian Floer chain group counts pearly tree discs of Type 2 and 3 in Definition \ref{defn:pearlydisc}.
    \item The boundary map in the pearly tree Lagrangian Floer chain group counts pearly tree discs of Type 4 in Definition \ref{defn:pearlydisc}.
\end{itemize}
For the first case, he proof follows the argument that the Lagrangian Floer homology of a Lagrangian submanifold $L$ is isomorphic to the morse homology $H(L)$. 

For of the second and third cases, the proof uses the following strategy. Starting from the pearly tree holomorphic disc (there are no morse trajectory components by Corollary \ref{cor:nomorsetra}), we move the Lagrangian immersion using a Hamiltonian flow. This generates a family of holomorphic discs. We demonstrate that this family forms a moduli space of dimension 1, and this moduli space connects the pearly tree discs to the holomorphic discs corresponding to the boundary maps of the Hamiltonian Lagrangian Floer chain group.\\

For the reader's convenience, the main theorem is restated here.
\begin{them}
    Let $(M^n,\omega)$ be a closed symplectic manifold. Suppose $g:L\looparrowright M$ be a Lagrangian immersion. Assume $f$ is a morse function defined on $L$ whose critical points are different from the self-intersections of $L$. Given a regular almost complex structure on $M$, then there is a local Hamiltonian flow $\phi_t$ such that the pearly tree immersed Lagrangian Floer chain group $(\CF^P(L),\partial^P)$ defined by $f$ is identified with the Hamiltonian immersed Lagrangian Floer chain group $(\CF^H(L,L_{\phi_\epsilon}),\partial^H)$, where $L_{\phi_\epsilon}$ stands for the Lagrangian immersion $\phi_\epsilon\circ g$ for some small $\epsilon>0$.
    
    Conversely, equipped with a regular almost complex structure on $M$, given a local Hamiltonian flow $\phi_t$ such that the corresponding Hamiltonian function restricting to $L$ is a morse function $f$, then there is a canonical identification between $(\CF^H(L,L_{\phi_\epsilon}),\partial^H)$ and the pearly tree immersed Lagrangian Floer chain group $(\CF^P(L),\partial^P)$ defined by $f$, for small enough $\epsilon>0$.
\end{them}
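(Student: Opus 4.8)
The plan is to prove both directions by reducing the reverse direction to the forward one through an intermediate chain complex, and to establish the forward identification by matching generators and boundary maps separately. For the forward direction, given the Morse function $f$, I would take the local Hamiltonian function $H$ obtained by extending $f$ constantly along the fibers of $T^*_\varepsilon L$ under the Weinstein identification $G$ of Theorem \ref{thm:tubu}, and let $\phi_t$ be its flow. For the reverse direction, where the given $H$ need not be fiber-constant, I would introduce $H_1\in\mathcal{H}^{cf}$, the constant fiber extension of $H|_L$, and show that $(\CF^{H_1}(L,L_{\phi_\epsilon^1}),\partial^{H_1})$ is identified with both $(\CF^P(L),\partial^P)$ and $(\CF^H(L,L_{\phi_\epsilon}),\partial^H)$. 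The latter is a continuation/invariance argument using Lemma \ref{lem:famlag} applied to a smooth family $\{L_t\}_{t\in[0,1]}$ interpolating from $L_{\phi_\epsilon}$ to $L_{\phi_\epsilon^1}$: for $\epsilon$ small the energy of the relevant discs is small enough to rule out bubbling, so the signed count of index-zero discs is independent of $t$.

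For the generator correspondence, I would use that the fiber-constant Hamiltonian vector field of $f$ restricts on the zero section to a gradient-type flow of $f$, so that for small $\epsilon$ the transverse intersections $L_{\phi_\epsilon}\times_M L$ split into two families: one intersection point localized near each point of $\Crit(f)$, and, near each self-intersection $g(p)=g(q)$, two generalized intersections recording the ordered pairs $(p,q)$ and $(q,p)$. This yields the desired bijection between $\Crit(f)$ together with $R$ and the generalized intersections $L_{\phi_\epsilon}\times_M L$.

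The boundary map is then matched in the three cases. For Type 1 (critical point to critical point), I would invoke the standard Floer-to-Morse degeneration: as $\epsilon\to 0$ the thin holomorphic strips connecting the near-critical-point intersections converge to negative gradient trajectories of $f$, so $\partial^H$ on these generators counts Morse trajectories, matching $\partial^P$. For Types 2 and 3, Corollary \ref{cor:nomorsetra} forces the contributing pearly discs to have constant Morse component, hence each is a holomorphic disc with boundary on $L$ having one branch switching endpoint (the self-intersection) and one branch non-switching endpoint sitting at a critical point of $f$. I would deform $L$ to $L_{\phi_\epsilon}$ through the Hamiltonian flow, producing a smooth family of holomorphic discs with boundary on $L$ and $L_t$; Theorem \ref{thm:regfam} supplies regularity along this family, and the resulting moduli space is a $1$-manifold whose two ends are the pearly disc at $t=0$ and the Hamiltonian Floer disc at $t=\epsilon$. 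For Type 4 (self-intersection to self-intersection), the same deformation directly carries the holomorphic disc with boundary on $L$ to one with boundary on $L$ and $L_{\phi_\epsilon}$, giving a bijection of index-zero discs.

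The main obstacle I anticipate is the compactness and gluing analysis in the Type 2 and Type 3 cases: I must show that the $1$-dimensional family of discs produced by the Hamiltonian deformation has no interior degeneration (disc or strip bubbling, or breaking off a nonconstant gradient segment) for $\epsilon$ small, so that its only boundary components are precisely the pearly disc and the corresponding Hamiltonian Floer disc, and that the orientations match so the counts agree with sign. This will require a uniform energy bound along the family combined with the family transversality of Theorem \ref{thm:regfam}, and it is where the interplay between the Morse-theoretic end and the holomorphic end is most delicate.
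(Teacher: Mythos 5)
Your proposal follows essentially the same route as the paper: the forward direction via the fiber-constant extension of $f$, the reverse direction via the intermediate complex $(\CF^{H_1}(L,L_{\phi_\epsilon^1}),\partial^{H_1})$ compared to $(\CF^H(L,L_{\phi_\epsilon}),\partial^H)$ through Lemma \ref{lem:famlag}, the same generator matching, and the same three-case treatment of the boundary map (Floer's Morse-to-holomorphic-strip correspondence for Type 1, deformation of one boundary component through the Hamiltonian flow for Types 2--4). The compactness and orientation issues you flag for the Type 2 and 3 families are real and are in fact left largely implicit in the paper's own argument.
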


\begin{proof}
    It is a direct consequence of Theorem \ref{thm:5.1} and Theorem \ref{thm:5.2}.
\end{proof}

\begin{them}\label{thm:5.1}
    Let $(M,\omega)$ be a closed symplectic manifold. Suppose $g:L\looparrowright M$ is a Lagrangian immersion, and assume $f$ is a morse function defined on $L$ whose critical points are distinct from the self-intersections of $L$. Given a regular almost complex structure on $M$, there is a local Hamiltonian flow $\phi^1_t$ associated to $H_1\in\mathcal{H}^{cf}$, such that the pearly tree immersed Lagrangian Floer chain group $(\CF^P(L),\partial^P)$ defined by $f$ is isomorphic to $(\CF^{H_1}(L,L_{\phi^1_\epsilon}),\partial^{H_1})$ for some sufficiently small $\epsilon>0$.
    
    Conversely, equipped with a regular almost complex structure on $M$, given a local Hamiltonian flow $\phi_t$ associated with $H_1\in\mathcal{H}^{cf}$, there is a canonical isomorphism between $(\CF^{H_1}(L,L_{\phi^1_\epsilon}),\partial^{H_1})$ and the pearly tree immersed Lagrangian Floer chain group $(\CF^P(L),\partial^P)$ defined by $f:=H_1|_{L}$ for small enough $\epsilon>0$.
\end{them}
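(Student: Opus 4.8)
The plan is to exploit the very special structure of Hamiltonians in $\mathcal{H}^{cf}$. Given the Morse function $f$, I take $H_1 \in \mathcal{H}^{cf}$ to be the pullback of $f$ under the bundle projection $T^*_\varepsilon L \to L$, so that $H_1(q,p) = f(q)$ in the Weinstein chart of Theorem \ref{thm:tubu}. A direct computation in canonical coordinates $(q^i,p_i)$, where $G^*\omega = \sum_i dp_i \wedge dq^i$, shows that the Hamiltonian vector field defined by $\iota_X(G^*\omega) = dH_1$ is purely vertical, $X = \sum_i \frac{\partial f}{\partial q^i}\,\partial_{p_i}$, so its flow is the fiberwise translation $\phi^1_t(q,p) = (q,\, p + t\,df(q))$. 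Consequently $L_{\phi^1_\epsilon}$ is the image under $G$ of the graph of $\epsilon\,df$. First I would use this to set up the bijection on generators: over the zero section, $L \cap L_{\phi^1_\epsilon}$ consists exactly of the points where $\epsilon\,df(q)=0$, i.e.\ of $\Crit(f)$, while near each self-intersection $m = g(p) = g(q)$ the two transverse local sheets of $L$ are pushed apart so that, for $\epsilon$ small, they produce precisely two transverse intersection points of $L$ with $L_{\phi^1_\epsilon}$, matching the two ordered pairs $(p,q)$ and $(q,p)$. Since $f$ has no critical points at self-intersections, these two families of intersection points are disjoint and exhaust $L \times_M L_{\phi^1_\epsilon}$ for $\epsilon$ small.

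With the generators matched, the remaining task is to identify $\partial^{H_1}$ with $\partial^P$, which I would organize along the three cases indicated in the overview. For the contributions of Type 1 pearly discs (Morse trajectories between critical points) I would invoke Floer's comparison between the Floer and Morse complexes in a cotangent neighbourhood \cite{floer1988morse}: for $\epsilon$ small enough the rigid holomorphic strips with boundary on $L$ and $L_{\phi^1_\epsilon}$ that stay near the zero section are in bijection with the rigid gradient trajectories of $f$, with the Maslov and Morse gradings agreeing. This is the part that is genuinely classical, and I would treat it by reduction to the embedded case localized near the zero section.

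For the Type 2, Type 3 and Type 4 contributions I would run the deformation argument. By Corollary \ref{cor:nomorsetra}, the Type 2 and Type 3 discs that contribute to $\partial^P$ have constant Morse-trajectory part, so in all three cases the pearly object is a single $J$-holomorphic disc $u$ with boundary on $L$ whose special points $u(\pm1,0)$ are either branch-switching self-intersections or branch non-switching points sitting at an index-$0$ (resp.\ index-$n$) critical point. I would then turn on the flow, replacing the single boundary condition $L$ by the moving pair $(L, L_{\phi^1_t})$ for $t \in [0,\epsilon]$: a branch-switching point at a self-intersection $m$ resolves into the corresponding transverse intersection of $L$ and $L_{\phi^1_t}$, and a constant Morse endpoint resolves into the intersection near the critical point. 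Using the family transversality of Theorem \ref{thm:regfam} together with the openness of surjectivity of the linearized operator (exactly as in the $w(t)$-argument earlier in the paper), I would show that the total space $\bigcup_{t\in[0,\epsilon]}\widetilde{\mathcal{M}}(t)$ of such discs is, after quotienting by $\R$-translation, a smooth $1$-dimensional cobordism whose two ends are the rigid pearly discs at $t=0$ and the rigid Hamiltonian discs at $t=\epsilon$. Counting the ends of this cobordism then yields $\#\mathcal{M}^P = \#\mathcal{M}^{H_1}$ in the rigid case.

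The main obstacle is making the last deformation argument rigorous uniformly as $\epsilon \to 0$: I must rule out bubbling, boundary breaking, and escape of the discs away from the local models near the self-intersections and critical points, so that Gromov compactness applies to the family and the cobordism is genuinely compact with the claimed ends. This is where the immersed, possibly obstructed, setting departs from Alston--Bao, since I cannot appeal to a chain-complex structure; instead I would localize all estimates in the Weinstein chart, use the energy bound forced by the small width $\epsilon$ to confine the strips, and verify that the resolution of each branch-switching point is a local diffeomorphism of moduli for $\epsilon$ small. Finally, for the converse direction I would note that an element $H_1 \in \mathcal{H}^{cf}$ is by definition the fiberwise-constant extension of $f := H_1|_L$, so the construction above is symmetric and the resulting identification is canonical; running the same three-case analysis in reverse gives the isomorphism $(\CF^{H_1}(L,L_{\phi^1_\epsilon}),\partial^{H_1}) \cong (\CF^P(L),\partial^P)$.
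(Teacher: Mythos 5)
Your proposal follows essentially the same route as the paper: the same fiberwise-constant extension $H_1$ with vertical Hamiltonian vector field, the same matching of generators (critical points over the zero section plus two ordered pairs per self-intersection), Floer's Morse--Floer comparison for the Type 1 contributions, and a moving-boundary family argument for Types 2--4, which is exactly the content of the paper's Lemma \ref{lem:famlag}. The only notable difference is that you explicitly flag the compactness/degeneration issue at $t=0$ (where $L_0=L$ fails to intersect $L$ transversely) that the paper passes over in silence, and you omit the paper's rescaling-by-$a$ trick for removing the smallness hypothesis of Equation \ref{equ:5.1}, replacing it with an equivalent smallness of $\epsilon$.
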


\begin{proof}
    \textbf{Step 1: identification of the generators.}\par
    Recall that $H_1|_L=f$ and $H_1$ is the constant extension of $f$ in the fiber direction of $T_\varepsilon^*L$. The goal is to show that the critical points $f$ are the same as the fixed points of the local Hamiltonian flow $\phi^1_t$ of $H_1$ when $t$ is small. Let $(x_1,\ldots,x_n)$ be local coordinates of $L$ and $(x_1,\ldots,x_n,y_1,\ldots,y_n)$ be the corresponding local coordinates of $T_\varepsilon^*L$. In these local coordinates, the symplectic form is expressed as $\omega=\sum_{i=1}^ndx_i\wedge dy_i$. So the local Hamiltonian vector field $X$ of $H_1$ is
    \[
    X=\sum_{i=1}^n\frac{\partial H_1}{\partial x_i}\frac{\partial}{\partial y_i}.
    \]
    Given $x\in L$, notice that $X|_x=0$ if and only if 
    $$
    \sum_{i=1}^n\frac{\partial H_1}{\partial x_i}\frac{\partial}{\partial y_i}|_x=\sum_{i=1}^n\frac{\partial f}{\partial x_i}\frac{\partial}{\partial y_i}|_x=0.
    $$
    Therefore the points where $X|_x=0$ are exactly the critical points of $f$. Moreover, the fixed points of $\phi^1_t$ are $x\in L$ such that $X|_x=0$. This establishes the bijection between the critical points $f$ and the fixed points of $H_1$.\par
    
    The remaining the generators of $(\CF^P(L),\partial^P)$ and $(\CF^{H_1}(L,L_{\phi^1_\epsilon}),\partial^{H_1})$ are both from the self-intersections of $L$. Each self-intersection of $L$ contributes two generators to both $(\CF^P(L),\partial^P)$ and $(\CF^{H_1}(L,L_{\phi^1_\epsilon}),\partial^{H_1})$. This completes step 1.\\

    \textbf{Step 2: identification of the boundary maps.}\par
    Case 1. The boundary map of $(\CF^P(L),\partial^P)$ counting the pearly tree discs of Type 1 in Definition \ref{defn:pearlydisc}. Suppose $u(s)$ is a morse trajectory connecting $x,y\in\Crit(f)$, then
    \[
    \frac{du(s)}{ds}=-\mathrm{grad}(f)(u(s)).
    \]
    Let $\phi^1_t$ be the local Hamiltonian flow of $H_1$, the following calculation shows that $\widetilde{u}(s,t):=\phi^1_t(u(s))$ is a holomorphic map from $\R\times[0,\epsilon]$ to $T_\varepsilon^*L$.
    \begin{equation*}
        \begin{aligned}
            \frac{\partial}{\partial s}\widetilde{u}(s,t)+J\frac{\partial}{\partial t}\widetilde{u}(s,t)&=\frac{\partial}{\partial s}\phi^1_t(u(s))+J\frac{\partial}{\partial t}\phi^1_t(u(s))\\
            &=d\phi^1_t(u(s))(\frac{d}{ds}u(s))+Jd\phi^1_t(u(s))(X(u(s)))\\
            &=-d\phi^1_t(u(s))(\mathrm{grad}(f)(s))+Jd\phi^1_t(u(s))(J\mathrm{grad}(f)(u(s)))\\
            &=-d\phi^1_t(u(s))(\mathrm{grad}(f)(s))-J^2d\phi^1_t(u(s))(\mathrm{grad}(f)(u(s)))\\
            &=0,
        \end{aligned}
    \end{equation*}
    where third equation above holds because $\sum_{i=1}^ndx_i\wedge dy_i(X,\cdot)=dH_1$,
    \[
    \mathrm{grad}(H_1)=-JX,
    \]
    and $H_1|_L=f$. Therefore we get a holomorphic disc by identifying $\R\times[0,\epsilon]$ with $\D$.\\
    Conversely, let $\widetilde{u}:\D\rightarrow T_\varepsilon^*L$ be a holomorphic disc connecting two fixed points of the local Hamiltonian flow of $H_1$. Identifying $\D$ with $\R\times[0,1]$, we assume the domain of $\widetilde{u}$ is $\R\times[0,1]$. \textbf{To proceed, we require the following assumption:}
    \begin{equation}\label{equ:5.1}
        |H_1|+\sum_i|\frac{\partial H_1}{\partial x_i}|+\sum_{i.j}|\frac{\partial^2H_1}{\partial x_i\partial x_j}|\le \varepsilon_1,
    \end{equation}
    for some $\varepsilon_1>0$ small enough. Because $f=H_1|_L$, the function $f$ and its first and second derivatives are controlled by $\varepsilon_1$. Applying the argument after Lemma 5.1 in Floer \cite{floer1989witten}, and assuming Equation \ref{equ:5.1}, we conclude that $\widetilde{u}(s,t)$ can be expressed in the form of $\phi_t^1(u(s))$. Then
    \begin{equation*}
        \begin{aligned}
            0&=\frac{\partial}{\partial s}\widetilde{u}(s,t)|_{t=0}+J\frac{\partial}{\partial t}\widetilde{u}(s,t)|_{t=0}\\
            &=\frac{\partial}{\partial s}\phi^1_t(u(s))|_{t=0}+J\frac{\partial}{\partial t}\phi^1_t(u(s))|_{t=0}\\
            &=d\phi^1_t(u(s))(\frac{d}{ds}u(s))|_{t=0}+Jd\phi^1_t(u(s))(X(u(s)))|_{t=0}\\
            &=\frac{d}{ds}u(s)+JX(u(s))\\
            &=\frac{d}{ds}u(s)-J^2\mathrm{grad}(H_1)(u(s)).
        \end{aligned}
    \end{equation*}
    Therefore
    \[
    \frac{d}{ds}u(s)=-\mathrm{grad}(H_1)(u(s)).
    \]
    To remove the assumption in Equation \ref{equ:5.1}, observe that there is a number $a$ small enough such that
    \begin{equation*}
        |aH_1|+\sum_i|\frac{\partial aH_1}{\partial x_i}|+\sum_{i.j}|\frac{\partial^2aH_1}{\partial x_i\partial x_j}|\le \varepsilon_1.
    \end{equation*}
    Then there is a bijection between the Type 1 pearly tree discs (i.e. morse trajectories) of $(\CF^P(L),\partial^P)$ and the holomorphic discs connecting the fixed points of the local Hamiltonian flow defined by $aH_1$ counting the boundary map of the Hamiltonian immersed Lagrangian Floer chain group induced by $aH_1$. By solving Equation \ref{equ:5.2}, 
    \[
    \phi_t^1|_L(x)=(x,tdH_1).
    \]
    Similarly, the local Hamiltonian flow of $aH_1$ is expressed as
    \[
    \phi_{at}^1|_L(x)=(x,atdH_1).
    \]
    Therefore, there is a smooth family of Lagrangian immersions from $\phi_{at}^1|L$ to $\phi_{t}^1|L$ satisfying the conditions of Lemma \ref{lem:famlag}. Consiquently, the boundary maps of the Hamiltonian immersed Lagrangian Floer chain groups defined by $aH_1$ and $H_1$ are identified. This completes the proof of Case 1.\par
    
    Case 2. The boundary map of $(\CF^P(L),\partial^P)$ counting the pearly tree discs of types other than Type 1 in Definition \ref{defn:pearlydisc}. According to Corollary \ref{cor:nomorsetra}, pearly discs of Types 2, 3, and 4 that contribute to the boundary maps are holomorphic discs. This scenario is equivalent to the case where a holomorphic disc has its boundary on two different Lagrangians.

    To establish a bijection between the boundary maps of $(\CF^P(L),\partial^P)$ and $(\CF^{H_1}(L,L_{\phi^1_\epsilon})$. By moving one piece of the boundary of these holomorphic discs using the local Hamiltonian flow of $H_1$ and applying Lemma \ref{lem:famlag}, we get the bijection of the boundary maps of $(\CF^P(L),\partial^P)$ and $(\CF^{H_1}(L,L_{\phi^1_\epsilon}),\partial^{H_1})$.
\end{proof}

\begin{lem}\label{lem:famlag}
    Let $(M,\omega)$ be a closed symplectic manifold. Given an almost complex structure $J\in\mathcal{J}_{reg}$. Suppose that $g:L\looparrowright M$ is a Lagrangian immersion and $g_1^t:L_t\looparrowright M$ is a smooth family of Lagrangian immersions for $t\in[0,1]$. Assume $L_t$ and $L$ intersect transversely for all $t$. Then the holomorphic discs with boundary in $L_0$ and $L$ are identified with the holomorphic discs in $L_1$ and $L$, where all the holomorphic discs here are assumed to have 0-dimensional moduli spaces.
\end{lem}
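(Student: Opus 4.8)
The plan is to run Floer's continuation argument by assembling the fibrewise moduli spaces $\mathcal{M}(L_t,L)$ into a single parametrized moduli space over $[0,1]$ and showing that this parametrized space is a compact one-dimensional cobordism whose boundary is exactly $\mathcal{M}(L_0,L)\sqcup\mathcal{M}(L_1,L)$. Concretely, since $L_t$ meets $L$ transversely for every $t$, the generalized intersection points vary in smooth paths $x_\pm^t\in L_t\times_M L$, and I would form the total space
\[
\mathcal{M}(L_{[0,1]},L):=\bigcup_{t\in[0,1]}\mathcal{M}(L_t,L;x_\pm^t),
\]
together with the projection $\pi:\mathcal{M}(L_{[0,1]},L)\to[0,1]$. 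By hypothesis each fibre is a $0$-dimensional moduli space, so the expected dimension of the total space is $1$; the two endpoint fibres are the objects we wish to identify, and a compact $1$-manifold with boundary equal to these two fibres furnishes the desired identification of the boundary-map coefficients.

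First I would establish transversality for the parametrized problem. Starting from the given $J\in\mathcal{J}_{reg}$, Theorem \ref{thm:regfam} produces a generic path $\{J_t\}_{t\in[0,1]}\subset\mathcal{J}_{reg}$ so that the fibrewise Cauchy--Riemann operator is surjective for every $t$. Because fibrewise surjectivity already implies surjectivity of the parametrized linearization (the extra $\partial/\partial t$ direction can only enlarge the kernel), the total space $\mathcal{M}(L_{[0,1]},L)$ is then cut out transversely and is a smooth $1$-manifold, with boundary occurring precisely over $t=0$ and $t=1$, that is, $\partial\mathcal{M}(L_{[0,1]},L)=\mathcal{M}(L_0,L)\sqcup\mathcal{M}(L_1,L)$.

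The heart of the proof is compactness: I must show $\mathcal{M}(L_{[0,1]},L)$ is compact, so that it is a finite disjoint union of arcs and circles. A uniform energy bound follows because all the discs connect the fixed endpoint paths $x_\pm^t$, so their symplectic areas are controlled by the continuously varying, hence bounded, topological energy of the family. Given this bound, Gromov compactness applies, and the two failure modes are bubbling of holomorphic spheres or discs and breaking into several strips. Sphere and disc bubbles, as well as genuine breaking, force a principal component of strictly smaller index, which for a generic path $\{J_t\}$ cannot occur within a one-dimensional family over the \emph{interior} of $[0,1]$, leaving only the honest endpoints $t=0,1$. This is the step I expect to be the main obstacle, since in the immersed, potentially obstructed setting disc bubbling is a priori a codimension-one phenomenon and must be excluded using the smallness of the energy in the local Weinstein-neighbourhood regime together with the genericity of $\{J_t\}$.

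Finally, having shown that $\mathcal{M}(L_{[0,1]},L)$ is a compact $1$-manifold with boundary $\mathcal{M}(L_0,L)\sqcup\mathcal{M}(L_1,L)$, I would read off the identification: following the arcs of the cobordism matches the elements of $\mathcal{M}(L_0,L)$ with those of $\mathcal{M}(L_1,L)$, so that the two boundary-map coefficients agree. In the applications the family $L_t$ is a small local Hamiltonian deformation, so the projection $\pi$ has no folds, every arc runs monotonically from $t=0$ to $t=1$, and the matching is a genuine bijection.
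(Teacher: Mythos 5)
Your proposal follows essentially the same route as the paper's own proof: both assemble the parametrized moduli space over $[0,1]$ using a regular path of almost complex structures supplied by Theorem \ref{thm:regfam}, observe that fibrewise regularity makes the total space a one-dimensional manifold fibering over the interval, and read off the identification of the two endpoint fibres from its arcs. If anything you are more careful than the paper, which simply asserts that $\bar\partial_{[0,1]}^{-1}(0)$ is a smooth family of points parametrized by $[0,1]$ without addressing the compactness, energy-bound, and disc-bubbling issues that you correctly single out as the main obstacle in the immersed, possibly obstructed setting.
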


\begin{proof}
    Choose a regular path $J_\lambda\in\mathcal{J}_{reg}$, $\lambda\in[0,1]$ such that $J_0=J_1$ (Theorem \ref{thm:regfam}). Because $L_t$ and $L$ intersect transversely for all $t$, the intersections of $L_0$ with $L$ and $L_1$ with $L$ are identified via the path defined by the intersections of $L_t$ with $L$ as $t$ varies. Suppose $x_\pm^0$ is a pair of intersections $L_0$ and $L$ connected by a holomorphic disc with boundary in $L_0$ and $L$, and assume the moduli space of these holomorphic disc here has dimension 0. Let $x^t_\pm$ be the paths of intersections of $L_t$ with $L$, starting at $x_\pm^0$ as $t$ varies. To complete the proof, we need to show that there is an identification between all the holomorphic discs with 0-dimensional moduli space connecting $x_\pm^0$ and all the holomorphic discs with 0-dimensional moduli space connecting $x_\pm^1$. Consider the map
    \begin{align*}
        \bar\partial_{[0,1]}:[0,1]\times W^{1,p}_{x_\pm^t}(\D,M)&\rightarrow L^p(\D,\Omega^{0,1}\otimes u^*TM)\\
        (t,u)&\mapsto\bar\partial_{J_t}u.
    \end{align*}
    By the assumption on $J_t$, 0 is a regular value of the above map. Moreover, for every $t$, $\bar\partial_{J_t}^{-1}(0)$ is of dimension 0. Consequently, the space $\bar\partial^{-1}_{[0,1]}(0)$ is a smooth family of points parametrized by $[0,1]$. This family is non-empty because there is a holomorphic disc connecting $x^0_\pm$ with boundary in $L_0$ and $L$. As a result, there is a holomorphic disc connecting $x^1_\pm$ with boundary in $L_1$ and $L$. This identifies all the holomorphic discs with 0-dimensional moduli space connecting $x_\pm^0$ with all the holomorphic discs with 0-dimensional moduli space connecting $x_\pm^1$. So the Lemma follows.
\end{proof}

\begin{them}\label{thm:5.2}
    Let $(M,\omega)$ be a closed symplectic manifold. Suppose that $g:L\looparrowright M$ is a Lagrangian immersion. Suppose $H$ is a non-degenerate local Hamiltonian function and $H_1\in\mathcal{H}^{cf}$ is the constant extension along the fiber direction of $H_L$ in the Weinstein tubular neighbourhood of $L$. Let $\phi_t$ and $\phi_t^1$ be the local Hamiltonain flow of $H$ and $H_1$, respectively. Given an almost complex structure $J\in\mathcal{J}_{reg}$, then there is a small number $\delta>0$ such that $(\CF^{H}(L,L_{\phi_\epsilon}),\partial^{H})$ and $(\CF^{H_1}(L,L_{\phi_\epsilon^1}),\partial^{H_1})$ are isomorphic for all $\epsilon\in(0,\delta)$.
\end{them}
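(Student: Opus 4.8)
The plan is to realize both chain groups as the two endpoints of a single smooth family of Lagrangian immersions and then feed that family into Lemma \ref{lem:famlag}. Since $H$ and $H_1$ agree on the zero section, $H|_L=H_1|_L=f$, I would interpolate affinely by setting $H_s:=(1-s)H+sH_1$ for $s\in[0,1]$ and letting $\phi^s_t$ denote the local Hamiltonian flow of $H_s$ on $T^*_\varepsilon L$. Because the interpolation is affine and both endpoints restrict to $f$ on $L$, every $H_s$ satisfies $H_s|_L=f$; in particular each $H_s$ is non-degenerate with the same critical locus as $f$. The family $L_s:=\phi^s_\epsilon\circ g$ is then a smooth family of Lagrangian immersions (images of $g$ under the symplectomorphisms $\phi^s_\epsilon$) with $L_0=L_{\phi_\epsilon}$ and $L_1=L_{\phi^1_\epsilon}$. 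After reparametrizing $s$ as the variable $t$ appearing in Lemma \ref{lem:famlag}, applying that lemma to $\{L_s\}_{s\in[0,1]}$ would identify the $0$-dimensional moduli spaces of holomorphic discs with boundary in $L_{\phi_\epsilon}$ and $L$ with those having boundary in $L_{\phi^1_\epsilon}$ and $L$, hence identify $\partial^{H}$ with $\partial^{H_1}$.

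Before Lemma \ref{lem:famlag} can be invoked, its hypothesis that $L_s$ meets $L$ transversely for every $s$ must be verified, and this is where the bulk of the work lies and where smallness of $\epsilon$ is used. In the Weinstein coordinates $(x,y)$ with $\omega=\sum_i dx_i\wedge dy_i$, I would write $\phi^s_\epsilon(x,0)=(A^s_\epsilon(x),B^s_\epsilon(x))$ and observe, from the explicit form of the Hamiltonian vector field of $H_s$ together with $H_s|_L=f$, that the vertical component satisfies $B^s_\epsilon(x)=-\epsilon\,\nabla f(x)+O(\epsilon^2)$, uniformly in $s\in[0,1]$ and together with its $x$-derivatives. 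Thus the intersection equation $B^s_\epsilon(x)=0$ is, after dividing by $\epsilon$, a $C^1$-small perturbation of $\nabla f(x)=0$. Since $f$ is Morse, each critical point is a non-degenerate zero of $\nabla f$, so $\partial_x B^s_\epsilon=-\epsilon\,\mathrm{Hess}\,f+O(\epsilon^2)$ is invertible there after rescaling; the implicit function theorem with parameters $(s,\epsilon)$ then produces, for all sufficiently small $\epsilon$, a unique solution near each critical point, depending smoothly on $(s,\epsilon)$ and transverse (the invertibility of $\partial_x B^s_\epsilon$ is exactly transversality to the zero section). The horizontal drift recorded by $A^s_\epsilon$ is $O(\epsilon)$ and does not affect this leading-order analysis. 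The self-intersection generators are handled separately but more easily: near a double point of $L$ the two local sheets cross transversely, so a small Hamiltonian perturbation of one sheet still crosses the other transversely, giving exactly two persistent transverse intersections for every $s$. Running these estimates over the compact interval $s\in[0,1]$ yields a single $\delta>0$ for which $L_s\pitchfork L$ for all $s\in[0,1]$ and all $\epsilon\in(0,\delta)$.

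With transversality in hand, the bijection of generators (Step~1) is a by-product of the same argument: the zeros of $B^s_\epsilon$ and the perturbed double-point intersections move smoothly and stay transverse as $s$ runs from $0$ to $1$, so the generalized intersections of $L_{\phi_\epsilon}$ with $L$ are canonically identified with those of $L_{\phi^1_\epsilon}$ with $L$. This is precisely the generator identification already built into the statement of Lemma \ref{lem:famlag}. Combining this bijection of generators with the moduli-space identification supplied by Lemma \ref{lem:famlag} yields the chain isomorphism $(\CF^{H}(L,L_{\phi_\epsilon}),\partial^{H})\cong(\CF^{H_1}(L,L_{\phi^1_\epsilon}),\partial^{H_1})$ for every $\epsilon\in(0,\delta)$.

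I expect the main obstacle to be exactly the \emph{uniform} (in $s$) transversality of $L_s$ with $L$: one must control the whole family of flows $\phi^s_\epsilon$ simultaneously and ensure that neither the horizontal drift $A^s_\epsilon$ nor the $O(\epsilon^2)$ corrections ever render an intersection degenerate anywhere along the interpolation. The Morse hypothesis on $f$ furnishes the non-degeneracy needed at the $\epsilon=0$ limit, and compactness of $[0,1]$ upgrades the pointwise estimates to a single threshold $\delta$; making the implicit function theorem argument genuinely uniform over $s$, rather than merely for each fixed $s$, is the delicate point of the proof.
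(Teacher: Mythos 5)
Your proposal is correct and arrives at the same conclusion, but it takes a genuinely different route from the paper in two respects. First, the interpolating family: the paper connects the two Lagrangians by composing the flows, $\mathcal{P}_{s,t}=\phi_{(1-s)t}\circ\phi^1_{st}$, whereas you interpolate the Hamiltonians affinely, $H_s=(1-s)H+sH_1$, and flow by $\phi^s_\epsilon$. Both yield smooth families of Lagrangian immersions from $L_{\phi_\epsilon}$ to $L_{\phi^1_\epsilon}$; yours has the pleasant feature that $H_s|_L=f$ identically in $s$, so the critical locus governing the near-diagonal generalized intersections is literally constant along the deformation. Second, the transversality mechanism: the paper computes $\pi_y\frac{d}{dt}\big|_{t=0}d\mathcal{P}_{s,t}=-\frac{\partial^2H}{\partial x^2}$ and runs a mean-value-theorem contradiction to show that $\pi_yd\mathcal{P}_{s,t}$ is an isomorphism for small $t>0$, an argument which as written invokes invertibility of $\frac{\partial^2H}{\partial x^2}$ at every point of $L$; your expansion $B^s_\epsilon=-\epsilon\nabla f+O(\epsilon^2)$ combined with the implicit function theorem only uses non-degeneracy of $\mathrm{Hess}\,f$ at the actual intersection points, where the Morse hypothesis supplies it, and so localizes the non-degeneracy requirement to exactly where transversality must be checked. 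Both proofs handle the self-intersection generators by openness of transversality, obtain uniformity in $s$ from compactness of $[0,1]\times L$, and conclude identically by feeding the transverse family into Lemma \ref{lem:famlag} to identify the generators and the zero-dimensional disc moduli spaces, hence the boundary maps.
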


\begin{proof}
    \textbf{Step 1: identification of the generators.}\par
    The generators of $(\CF^{H}(L,L_{\phi_\epsilon}),\partial^{H})$ and $(\CF^{H_1}(L,L_{\phi_\epsilon^1}),\partial^{H_1})$ that are induced from the self-intersections $L$ can always be identified when $\epsilon$ is small. So we only need to construct a bijection between the remaining generators. Consider a smooth path connecting $\phi_t$ and $\phi_t^1$ defined as
    \begin{align*}
        \mathcal{P}:[0,1]\times L&\rightarrow T_\varepsilon^*L\\
        \mathcal{P}_{s,t}(x)&=\phi_{(1-s)t}\circ\phi_{st}^1(x).
    \end{align*}
    If $\mathcal{P}_{\cdot,t}(L)$ intersects $L$ transversely and $t$ is a small enough number, then $\mathbf{P}_{\cdot,t}^{-1}(L)$ is a 1-dimensional manifold connecting all the intersections of $\phi_t(L)$ with $L$ and the intersections of $\phi_t^1(L)$ and $L$. Therefore, the generators of 
    $(\CF^{H}(L,L_{\phi_\epsilon}),\partial^{H})$ and $(\CF^{H_1}(L,L_{\phi_\epsilon^1}),\partial^{H_1})$ are identified.\par

    To prove the transversality, we need to calculate the tangent map of $\phi_t$ and $\phi_t^1$. Let $(x_1,\ldots,x_n,y_1,\ldots,y_n)$ be the Darboux coordinates around $x_0\in L\subset T_\varepsilon^*L$. Here $(x_1,\ldots,x_n)$ and $(y_1,\ldots,y_n)$ are the coordinates for $L$ and the fiber, respectively. The Hamiltonian flow $\phi_t$ is defined by the following ODE:
    \begin{equation*}
    \left\{
        \begin{aligned}
            \Dot{x}=&\ \ \ \,\frac{\partial H}{\partial y}\\
            \Dot{y}=&-\frac{\partial H}{\partial x}
        \end{aligned}
    \right..
    \end{equation*}
    Let $v$ be tangent vector of $L$ at $x_0$. Define
    \[
    v(t):=d\phi_t(v).
    \]
    By taking the derivative with respect to $t$,
    \begin{equation*}
        \Dot{v}(t)=d\Dot{\phi_t}v=
        \left[
        \begin{aligned}
            &\frac{\partial^2H}{\partial x\partial y}\ \ \ \ \ \,\frac{\partial^2H}{\partial y^2}\\
            -&\frac{\partial^2H}{\partial x^2}\ \ -\frac{\partial^2H}{\partial x\partial y}
        \end{aligned}
        \right]v.
    \end{equation*}
    This is an ODE with initial value ${v}(0)=v$.
    Similarly, $\phi_t^1$ is the solution of 
    \begin{equation}\label{equ:5.2}
    \left\{
        \begin{aligned}
            \Dot{x}=&\ \ \ \,\frac{\partial H_1}{\partial y}(=0)\\
            \Dot{y}=&-\frac{\partial H_1}{\partial x}
        \end{aligned}
    \right..
    \end{equation}
    The derivative of $v(t):=d\phi_t^1(v)$ with respect to $t$ is
    \begin{equation*}
        \Dot{v^1}(t)=d\Dot{\phi_t^1}v=
        \left[
        \begin{aligned}
            &0\ \ \ \ \ \ \ 0\\
            -&\frac{\partial^2H_1}{\partial x^2}\ \ 0
        \end{aligned}
        \right]v.
    \end{equation*}
    Notice that $d\phi_0=Id$, for $v\in T_{x_0}L$,
    \[
    \pi_y\frac{d}{dt}|_{t=0}d(\phi_{(1-s)t}\circ\phi_{st}^1)(v)=\pi_yd((1-s)\Dot{\phi_0}+s(d\phi_0)\Dot{\phi_0^1})(v)=-\frac{\partial^2H}{\partial x^2}|_{x_0}(v),
    \]
    where $\pi_y$ is the projection to the fiber direction $F$.\par

    Next we show that there is a small interval $(0,\delta_1)$ such that
    \begin{equation}\label{equ:fibermap}
        \pi_yd(\phi_{(1-s)t}\circ\phi_{st}^1):T_{x_0}L\rightarrow T_{x_0}F
    \end{equation}
    is isomorphic. Suppose on the contrary, there is  a sequence $t_i\searrow0$ such that the above map is not isomorphic at $t_i$. According to the mean value theorem, there are $\theta_i\in(t_{i},t_{i+1})$ for $i=1,2,\ldots,$ such that 
    \[
    \det(\frac{d}{dt}|_{\theta_i}\pi_yd(\phi_{(1-s)t}\circ\phi_{st}^1))=0.
    \]
    When $i\rightarrow\infty$,
    \[
    \det(-\frac{\partial^2H}{\partial x^2}|_{x_0})=0.
    \]
    Since $H$ is non-degenerate, $\frac{\partial^2H}{\partial x^2}$ is invertible. This gives a contradiction. As a result, for every $x_0\in L$, there is a neighbourhood $U\subset L$ of $x_0$ and a positive number $\delta_1>0$ such that the map defined in Equation \ref{equ:fibermap} is isomorphism for $t\in(0,\delta_1)$. Therefore, $\mathcal{P}_{s,t}|_U$ intersects $L$ transversely for all $t\in(0,\delta_1)$ and $s\in[0,1]$. Combining this with the fact that $L$ is compact, there is a number $\delta>0$ such that $\mathcal{P}_{s,t}(L)$ intersects $L$ transversely for all $t\in(0,\delta)$ and $s\in[0,1]$. This completes the proof of Step 1.\\
    
    \noindent\textbf{Step 2: identification of the boundary maps.}\par
    This is a direct consequence of Lemma \ref{lem:famlag} by letting $g_1^t=\mathcal{P}_{t,\epsilon}:L\rightarrow M$ with a given $\epsilon<\delta$.
    \end{proof}

\section{Calculation of examples on surfaces}
In this section, two examples are presented to illustrate the main theorem.

\begin{ex}
Consider the black curve $L$ in Figure \ref{fig_H1}. This is a Lagrangian immersion in the symplectic manifold $\R^2$. The red curve is obtained by performing a Hamiltonian flow to the right on the black curve. The morse function corresponding to this Hamiltonian function is the negative of the height function $h$. Figure \ref{fig_P1} illustrates $(\CF^P(L),\partial^P)$ defined by $h$. In Figure \ref{fig_H1}, the generators are the intersection points of the black and the red curve, marked in the diagram. The boundary map is calculated by counting bigons where the black curve forms the bottom boundary and the red curve forms the top boundary (see Figure \ref{fig_Holo_disc}). Specifically, the boundary map is described as follows, where generators that appear twice correspond to the same intersection points.

\begin{equation*}
        \xymatrix{
    d\ar[r]&j\ar[r]\ar[dr] & f\ar[r] & k\ar[r]\ar[dr] &a\\
    &c\ar[ur] \ar[dr]& b\ar[ur]& e\ar[ur]\ar[r]&d\\
    a\ar[r]&g\ar[r]\ar[ur]&h\ar[r]&e&
    }
\end{equation*}

\begin{figure}[H]
\centering 
\includegraphics[width=0.8\textwidth]{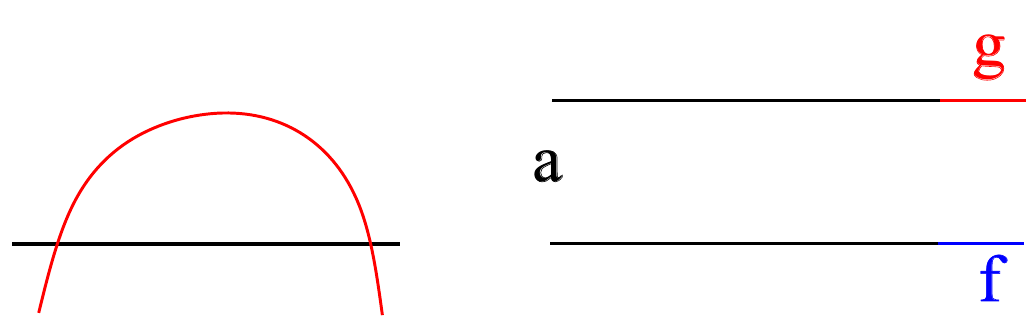}
\caption{The left picture is the holomorphic disc for Hamiltonian immersed Lagrangian Floer theory. The right picture is the disc part of the pearly tree from $a$ to $(g,f)$ (the morse trajectory part is constant).}
\label{fig_Holo_disc}
\end{figure}

For Figure \ref{fig_P1}, the generators are also marked in the diagram. The boundary map is defined by counting the pearly trees of Type 1 through 4, as specified in Definition \ref{defn:pearlydisc}. Note that pearly tree discs of Type 2 through 4 correspond to holomorphic discs, as stated in Corollary \ref{cor:nomorsetra}. The self-intersections of the black curve are branch switching points, with each self-intersection producing two generators. To distinguish these, the blue and red colors in the diagram indicate the order of the letters associated with those self-intersections.
\begin{itemize}
    \item If the blue branch is on the top when counting the holomorphic discs in the pearly trees, then the blue letter should appear first in the pair (see Figure \ref{fig_Holo_disc}).
    \item If the red branch is on the top when counting the holomorphic discs in the pearly trees, then the red letter should appear first in the pair.
\end{itemize}  
As a result, the boundary map of $(\CF^P(L),\partial^P)$ s described as follows, with generators that appear twice representing the same points.

\begin{equation*}
        \xymatrix{
    d\ar[r]&(j,h)\ar[r]\ar[dr] & (f,g)\ar[r] & k\ar[r]\ar[dr] &a\\
    &(c,b)\ar[ur] \ar[dr]& (b,c)\ar[ur]& e\ar[ur]\ar[r]&d\\
    a\ar[r]&(g,f)\ar[r]\ar[ur]&(h,j)\ar[r]&e&
    }
\end{equation*}

\begin{figure}[htbp]
\centering
\begin{minipage}[t]{0.4\textwidth}
\centering
\includegraphics[width=5cm]{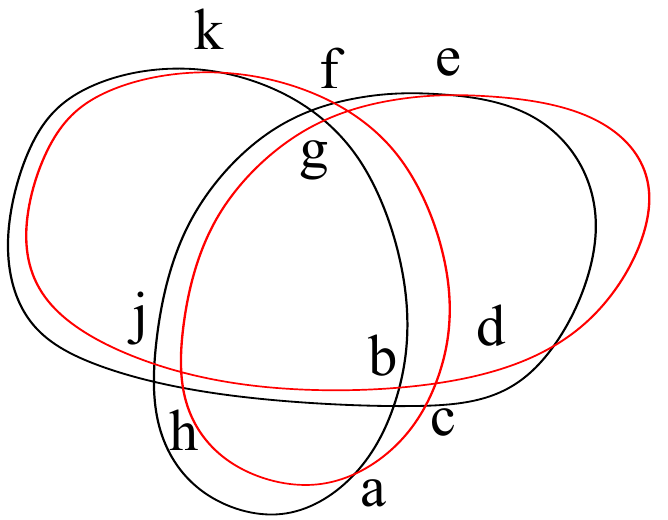}
\caption{Hamiltonian version}
\label{fig_H1}
\end{minipage}
\begin{minipage}[t]{0.4\textwidth}
\centering
\includegraphics[width=5cm]{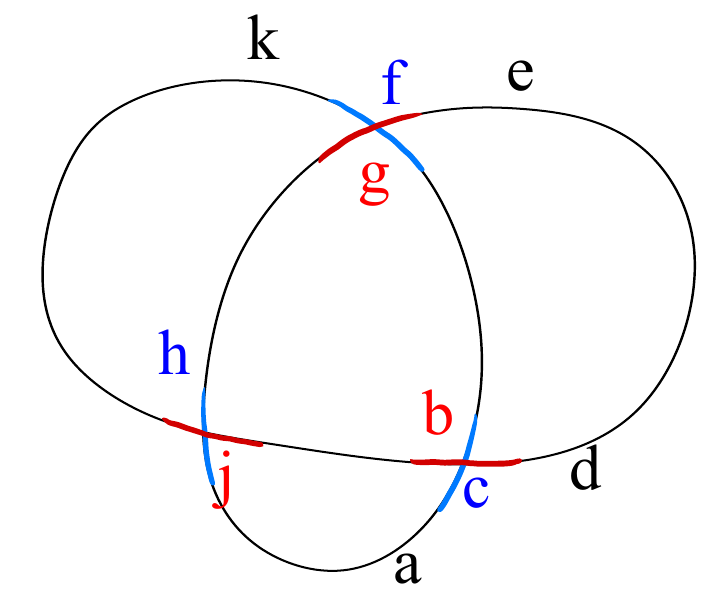}
\caption{Pearly tree version}
\label{fig_P1}
\end{minipage}
\end{figure}

\end{ex}

\begin{ex}

Consider the black curve $L$ in Figure \ref{fig_H2}. This is a Lagrangian immersion in the symplectic manifold $\R^2$. The red curve is given by performing a Hamiltonian flow to the right on the black curve. The Morse function corresponding to this Hamiltonian flow is the negative of the height function $h$. Figure \ref{fig_P2} represents $(\CF^P(L),\partial^P)$ defined by $h$. The same calculation method as in the previous example applies here, showing that the boundary map of $(\CF^P(L),\partial^P)$ for Figure \ref{fig_H2} is as follows:

\begin{equation*}
        \xymatrix{
    e\ar[r] & f\ar[r]\ar[dr] & i\ar[r] &h\\
    g\ar[r] & j\ar[r] \ar[ur]& a\ar[r]&b,
    }
\end{equation*}

The boundary map of Figure \ref{fig_P2} is:

\begin{equation*}
       \xymatrix{
    (e,d)\ar[r] & f\ar[r]\ar[dr] & i\ar[r] &(h,g)\\
    (g,h)\ar[r] & j\ar[r] \ar[ur]& a\ar[r]&(b,c),
    }
\end{equation*}

\begin{figure}[htbp]
\centering
\begin{minipage}[t]{0.4\textwidth}
\centering
\includegraphics[width=4cm]{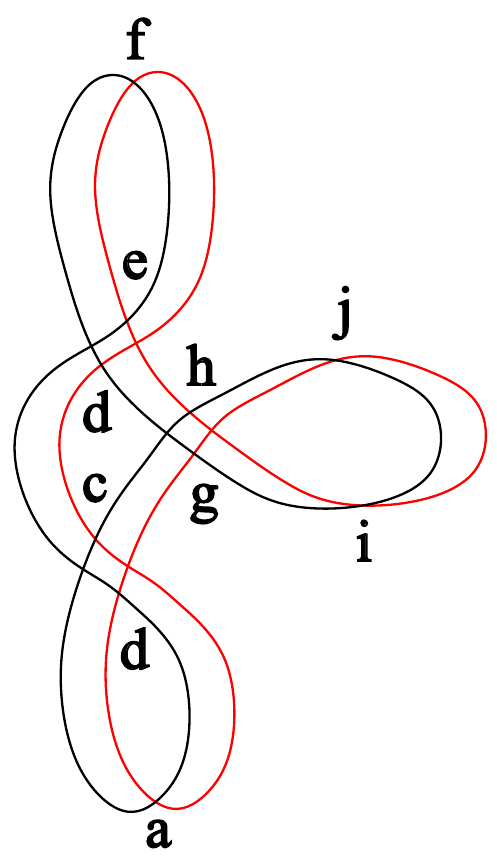}
\caption{Hamiltonian version}
\label{fig_H2}
\end{minipage}
\begin{minipage}[t]{0.4\textwidth}
\centering
\includegraphics[width=4cm]{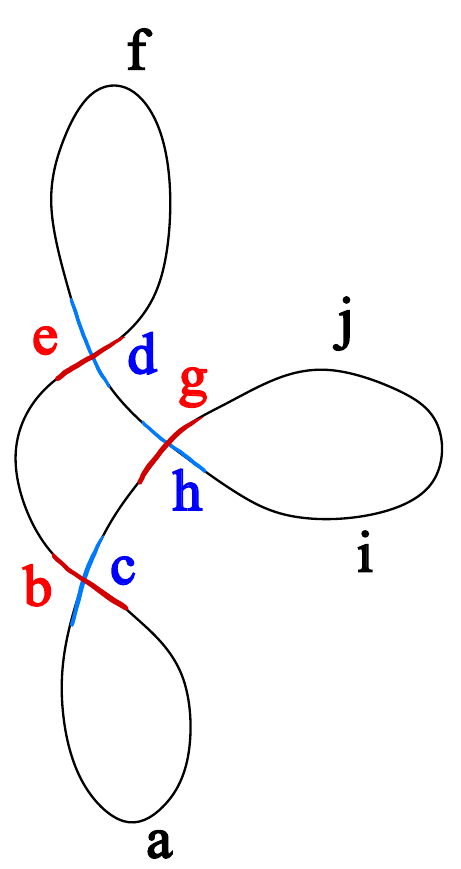}
\caption{Pearly tree version}
\label{fig_P2}
\end{minipage}
\end{figure}

\end{ex}

\bibliographystyle{plain}
\bibliography{references}

\end{document}